\newtheorem{theorem}{Theorem}[section]
\newtheorem{corollary}{Corollary}
\newtheorem{lemma}[theorem]{Lemma}
\theoremstyle{definition}
\newtheorem{definition}[theorem]{Definition}
\newtheorem{remark}{Remark}
\newtheorem{example}{Example}
\title[Chaos of time-varying discrete systems] 
      {Strong Li-Yorke chaos for time-varying discrete dynamical systems
with $A$-coupled-expansion}
\author[Hua Shao, Yuming Shi and Hao Zhu]{}
\subjclass{Primary: 37B55, 37D45; Secondary: 37B10.}
 \keywords{time-varying discrete dynamical system, Li-Yorke chaos, coupled-expansion, irreducible transition matrix, scrambled set.}
 \email{huashaosdu@163.com}
 \email{ymshi@sdu.edu.cn}
 \email{haozhusdu@163.com}
\thanks{This research was supported by the NNSF of Shandong Province (grantZR2011AM002).}
\thanks{* The corresponding author.}
\begin{document}
\maketitle

\centerline{\scshape Hua Shao, Yuming Shi$^{*}$ and Hao Zhu}
\medskip
{\footnotesize
 \centerline{Department of Mathematics,}
   \centerline{Shandong University}
   \centerline{ Jinan, Shandong 250100, P.~R. China}
} 

\medskip

\bigskip

 \centerline{(Communicated by the associate editor name)}

\begin{abstract}
This paper is concerned with strong Li-Yorke chaos induced by
$A$-coupled-expansion for time-varying (i.e., nonautonomous) discrete 
systems in metric spaces. Some criteria of chaos in the strong sense of
Li-Yorke are established via strict coupled-expansions for irreducible
transition matrices in bounded and closed subsets of complete metric spaces
and in compact subsets of metric spaces, respectively, where their
conditions are weaker than those in the existing literature. One example
is provided for illustration.
\end{abstract}

\section{Introduction}

In this paper, we study the following time-varying (i.e., nonautonomous) discrete system:
 $$x_{n+1}=f_n(x_n),\;\;n\geq0,\ \ \ \                                                                          \eqno(1.1)$$
where $f_n: D_n\rightarrow D_{n+1}$ is a map and $D_n$ is a set of a metric space $(X, d)$.

When $f_n=f$ and $D_n=D$ for all $n\geq0$, (1.1) is the following autonomous discrete dynamical system:
 $$x_{n+1}=f(x_n),\;\;n\geq0,\ \ \ \                                                                             \eqno(1.2) $$
where $f: D\subset X\rightarrow D$ is a map.

The autonomous discrete dynamical system (1.2) is governed by the single map $f$ while the time-varying discrete dynamical system (1.1) is generated by iteration of a sequence of maps in an order. So, it should be more difficult to study dynamical behaviors of (1.1) than those of (1.2) in general.
However, many physical, biological and economical complex systems are necessarily described by time-varying systems.
In fact, they occur more often than autonomous systems in the real world. Hence, recently, many scientists and mathematicians focused on complexity
of time-varying discrete systems ([1, 2, 4, 5, 7, 9--11, 13, 17, 26, 28--30, 32]).

It is well known that chaos is a kind of qualitative description of complexity of dynamical systems.
This concept was first introduced by Li and Yorke in 1975
when they investigated continuous interval maps in [18]. For autonomous discrete dynamical systems,
many elegant results about chaos have been obtained [3, 6, 8, 12, 15, 16, 18--20, 22--25, 27, 31, 33--35]. In particular, chaos induced by coupled-expansion has attracted a lot of attention from many scholars [3, 15, 23--25, 27, 31, 33--35], since it is easier to verify whether a system is
coupled-expanding than whether it has a transverse homoclinic orbit or regular snap-back repeller which can also induce chaos under certain conditions. Coupled-expanding maps were originally called turbulent by Block and Coppel when they studied continuous interval maps in 1992 [3]. They investigated their properties including topological entropy, period points, and chaos. In 2004, Yang and Tang extended the one-dimensional result [3, Chapter II, Proposition 15] to metric spaces [31]. In 2004, Chen and the second author of the present paper proved that a strictly coupled-expanding map,
which satisfies an expanding condition in distance on bounded closed subsets in a complete metric space or compact subsets in a metric space,
is topologically conjugate to a one-sided symbolic dynamical system [23]. In 2006, they generalized the concept of turbulence to maps in general metric spaces [25], and introduced a new term as coupled-expansion in order to avoid possible confusion with the term turbulence in fluid mechanics [24].
Later, this concept was extended to coupled-expanding maps associated with a transition matrix $A$ by the second author of the present paper with her coauthors, and several criteria of chaos induced by strict $A$-coupled-expansion were established [27, 33].

Recently, several results about chaos were obtained for
time-varying discrete systems [5, 13, 26, 28--30, 32]. Among them,
some works have been done about chaos induced by coupled-expansion
[5, 26, 28, 32]. In 2009, related concepts of chaos and coupled-expansion
were extended to time-varying discrete systems and a criterion of chaos
was established via coupled-expansion
for an irreducible transition matrix [26].
Based on this work, a criterion of chaos was given via coupled-expansion
for a special irreducible transition matrix in 2012 [32].
However, the assumptions of these existing results are stronger
than those related results for autonomous systems. So,
it is natural to ask whether these strong assumptions could be
weakened even such that the criteria of chaos are as elegant
as those for autonomous systems. In the present paper, we shall
study this problem and obtain several results which relax the
assumptions of some results in [26, 32] and extend these results
for autonomous systems to time-varying systems.

The rest of the paper is organized as follows. Section 2 presents some basic concepts and useful lemmas.
In Section 3, several criteria of Li-Yorke chaos induced by strict $A$-coupled-expansion in bounded and closed subsets in complete metric spaces
are established for system (1.1). In Section 4, other several criteria of Li-Yorke chaos induced by strict $A$-coupled-expansion
in compact subsets in metric spaces are established for system (1.1). Finally, an example is given to illustrate the theoretical results obtained in Section 5.

\section{Preliminaries}

In this section, some basic concepts for time-varying discrete dynamical systems are introduced, including chaos in the (strong) sense of Li-Yorke and coupled-expansion for a transition matrix. In addition, some basic concepts related to symbolic dynamical systems and some useful lemmas are also presented.


\begin{definition} ([26, Definition 2.7]) Let $S$ be a subset of $D_{0}$, containing at least two distinct points.
Then, $S$ is called a scrambled set of system (1.1) if, for any two distinct points $x_{0}, y_{0}\in S$, the two corresponding orbits satisfy
$${\rm (i)} \liminf\limits_{n\rightarrow\infty}^{}d(x_{n}, y_{n})=0, \;\;{\rm (ii)} \limsup\limits_{n\rightarrow\infty}^{}\emph{}d(x_{n}, y_{n})>0.$$
Further, $S$ is called a $\delta$-scrambled set for some positive constant $\delta$ if, for any two distinct points $x_{0}, y_{0}\in S$, (i) holds and,
instead of (ii), the following holds:
\begin{itemize}
\item[{\rm (iii)}] $\limsup\limits_{n\rightarrow\infty}^{}\emph{}d(x_{n}, y_{n})>\delta.$
\end{itemize}
\end{definition}

\begin{definition} ([26, Definitions 2.8 and 2.9]) If system (1.1) has an uncountable scrambled set,
then it is said to be chaotic in the sense of Li-Yorke. Furthermore, system (1.1) is said to be chaotic in the strong sense of Li-Yorke
if it has an uncountable $\delta$-scrambled set $S$ such that all the orbits starting from the points in $S$ are bounded.
\end{definition}

\begin{lemma} ({\rm[26, Theorem 2.3]}) Assume that there exists a positive integer $n_0$ such that the system
$$x_{n+1}=f_{n_0+n}(x_n),\;n\geq0,\ \ \ \ \eqno(2.1)$$
is chaotic in the strong sense of Li-Yorke; that is, system {\rm(2.1)} has an uncountable $\delta$-scrambled set $S\subset D_{n_0}$ such that
all the orbits of the system, starting from $S$, are bounded. If $(f_{n_{0}-1}\circ f_{n_{0}-2}\circ\cdots\circ f_{0})(D_0)\supset S$,
then the system {\rm(1.1)} is chaotic in the strong sense of Li-Yorke.
\end{lemma}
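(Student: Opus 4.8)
The plan is to pull the $\delta$-scrambled set $S$ of system (2.1) back through the finite composition $g:=f_{n_0-1}\circ f_{n_0-2}\circ\cdots\circ f_0\colon D_0\to D_{n_0}$ and thereby produce a $\delta$-scrambled set of system (1.1) sitting inside $D_0$. First I would invoke the hypothesis $g(D_0)\supset S$ to choose, for each $s\in S$, a point $t_s\in D_0$ with $g(t_s)=s$ (an appeal to the axiom of choice), and set $S':=\{t_s:s\in S\}\subset D_0$. Since the assignment $s\mapsto t_s$ is injective and $S$ is uncountable, $S'$ is uncountable.

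Next I would show that $S'$ is $\delta$-scrambled for (1.1). Fix distinct points $t_s,t_{s'}\in S'$, so that $s\ne s'$. Let $\{x_n\}$ be the orbit of $t_s$ under (1.1) and $\{y_n\}$ the orbit of $s$ under (2.1); by the definition of $g$ we have $x_{n_0}=g(t_s)=s=y_0$, and then a straightforward induction on $n$ gives $x_{n_0+n}=y_n$ for all $n\geq0$, since both sequences satisfy the same recursion $z_{n+1}=f_{n_0+n}(z_n)$ starting from the common value $s$. The same identification holds for $t_{s'}$, with orbit $\{x_n'\}$ under (1.1) and orbit $\{y_n'\}$ of $s'$ under (2.1). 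Because deleting the first $n_0$ terms of a sequence alters neither its $\liminf$ nor its $\limsup$,
$$\liminf_{n\to\infty}d(x_n,x_n')=\liminf_{n\to\infty}d(y_n,y_n')=0,\qquad \limsup_{n\to\infty}d(x_n,x_n')=\limsup_{n\to\infty}d(y_n,y_n')>\delta,$$
where the last two equalities use that $S$ is a $\delta$-scrambled set of (2.1) and $s\ne s'$. Hence $S'$ satisfies conditions (i) and (iii) of Definition 2.1, i.e.\ it is a $\delta$-scrambled set of (1.1).

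Finally I would verify that every orbit starting from $S'$ is bounded. For $t_s\in S'$, its orbit under (1.1) is the union of the finite set $\{t_s,f_0(t_s),(f_1\circ f_0)(t_s),\ldots,(f_{n_0-1}\circ\cdots\circ f_0)(t_s)\}$ and the orbit $\{y_n\}_{n\geq0}$ of $s$ under (2.1); the latter is bounded by hypothesis and the former is finite, so the whole orbit is bounded. Thus system (1.1) admits an uncountable $\delta$-scrambled set in $D_0$ with all orbits bounded, which by Definition 2.2 is exactly what it means for (1.1) to be chaotic in the strong sense of Li-Yorke. The argument is essentially bookkeeping: the only points that need care are the injective selection of preimages (to preserve uncountability) and the index shift by $n_0$ that identifies the tails of the orbits of (1.1) with those of (2.1); there is no genuine analytic obstacle.
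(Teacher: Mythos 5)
Your argument is correct and complete: the injective choice of preimages under $g=f_{n_0-1}\circ\cdots\circ f_0$ preserves uncountability, the index shift identifies the tail of each (1.1)-orbit with the corresponding (2.1)-orbit so that $\liminf$, $\limsup$, and boundedness transfer, and this is exactly the standard pullback argument behind the cited result. The paper itself states this lemma as a quotation from [26] without proof, so there is nothing to compare against, but your proof is the expected one and has no gaps.
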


\begin{lemma} ({\rm[23, Lemma 2.7]}) Let $(X,d)$ be a complete metric space, and $\{A_n\}$ be a sequence of bounded and closed subsets of $X$
such that the intersection of any finitely many subsets is nonempty. If the diameter $d(A_n)\rightarrow0$ as $n\rightarrow\infty$,
then $\bigcap\limits_{n\geq1}A_n\neq\emptyset$. Furthermore, $\bigcap\limits_{n\geq1}A_n$ contains only one point.
\end{lemma}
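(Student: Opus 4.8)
The plan is to reduce to the classical Cantor intersection argument by first replacing the (not necessarily nested) family $\{A_n\}$ with its partial intersections. Set $B_n := A_1\cap A_2\cap\cdots\cap A_n$ for each $n\geq1$. By the finite-intersection hypothesis each $B_n$ is nonempty; it is closed, being a finite intersection of closed sets; the chain $B_1\supseteq B_2\supseteq\cdots$ is decreasing; and since $B_n\subseteq A_n$ we get $d(B_n)\leq d(A_n)\to0$ as $n\to\infty$.

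Next I would construct a candidate point. Choose $x_n\in B_n$ for every $n\geq1$. For any $m\geq n\geq N$ both $x_m$ and $x_n$ lie in $B_N\subseteq A_N$, so $d(x_m,x_n)\leq d(A_N)$; since $d(A_N)\to0$, the sequence $\{x_n\}$ is Cauchy, and by completeness of $(X,d)$ it converges to some $x^{*}\in X$.

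Then I would check that $x^{*}\in\bigcap_{n\geq1}A_n$. Fix $k\geq1$. For all $n\geq k$ we have $x_n\in B_n\subseteq B_k\subseteq A_k$, so the tail $\{x_n\}_{n\geq k}$ lies in the closed set $A_k$; hence $x^{*}=\lim_{n\to\infty}x_n\in A_k$. As $k$ was arbitrary, $x^{*}\in\bigcap_{n\geq1}A_n$, so this intersection is nonempty. For the ``only one point'' part, suppose $x^{*},y^{*}\in\bigcap_{n\geq1}A_n$; then $x^{*},y^{*}\in A_n$ for every $n$, so $d(x^{*},y^{*})\leq d(A_n)\to0$, which forces $x^{*}=y^{*}$.

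\textbf{Main obstacle.} There is no deep difficulty here; the one point requiring care is that the sets $A_n$ themselves need not be nested, so the Cauchy estimate cannot be run directly through the $A_n$ and must instead be routed through the partial intersections $B_n$ — whose diameters still shrink because $B_n\subseteq A_n$, and whose nonemptiness is precisely what the finite-intersection assumption supplies.
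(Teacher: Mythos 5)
Your proof is correct: passing to the nested partial intersections $B_n=A_1\cap\cdots\cap A_n$, extracting a Cauchy sequence of chosen points, and using completeness together with closedness of each $A_k$ is exactly the standard Cantor-intersection argument, and the uniqueness step via $d(x^{*},y^{*})\leq d(A_n)\to0$ is also right. The paper does not prove this lemma itself (it imports it from reference [23]), but your argument is the expected one and has no gaps.
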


The following definitions and lemmas are related to matrices and symbolic dynamical systems. For convenience, we first briefly recall
some properties of one-sided symbolic dynamical system as follows, which will also be used in the sequent sections. For more details, see [35].\medskip

Let $S=\{1, 2, \cdots, N\},\; N\geq2$. The one-sided sequence space
$\Sigma_{N}^{+}:=\{\alpha=(a_{0}, a_{1}, a_{2},\cdots):\; a_{i}\in S, \;i\geq 0\}$
is a metric space with the distance
$\rho(\alpha, \beta)=\sum_{i=0}^{\infty}d(a_{i}, b_{i})/2^{i}$,
where $\alpha=(a_{0}, a_{1}, a_{2},\cdots)$, $\beta=(b_{0}, b_{1}, b_{2},\cdots)\in \Sigma_{N}^{+}$,
$d(a_{i}, b_{i})=1$ if $a_{i}\neq b_{i}$, and $d(a_{i}, b_{i})=0$ if $a_{i}=b_{i}$, $i\geq 0$. It is a complete metric space and a Cantor set.
Define the shift map $\sigma: \Sigma_{N}^{+}\rightarrow \Sigma_{N}^{+}$ by
$\sigma(\alpha):=(a_{1}, a_{2},\cdots)$, where $\alpha=(a_{0}, a_{1}, a_{2},\cdots)$. This map is continuous and $(\Sigma_{N}^{+}, \sigma)$ is called the one-sided symbolic dynamical system on $N$ symbols.\medskip

A matrix $A=(a_{ij})_{N\times N}\;(N\geq2)$ is said to be a transition matrix if $a_{ij}=0$ or 1 for all $i,j$; $\sum\limits_{j=1}^{N}a_{ij}\geq 1$ for all $i$; and $\sum\limits_{i=1}^{N}a_{ij}\geq 1$ for all $j$, $1\leq i,j\leq N$ [14]. For a given transition matrix $A=(a_{ij})_{N\times N}$, denote
$\Sigma_{N}^{+}(A):=\{s=(s_{0}, s_{1}, \cdots, s_{n}, \cdots )$ $\in\Sigma_{N}^{+}: a_{s_{j}s_{j+1}}=1, \;j\geq0\}$.
Then $\Sigma_{N}^{+}(A)$ is a compact subset of $\Sigma_{N}^{+}$ and invariant under $\sigma$.
The map $\sigma_{A}:=\sigma\mid_{\Sigma_{N}^{+}(A)}:\Sigma_{N}^{+}(A)\rightarrow \Sigma_{N}^{+}(A)$ is said to be the subshift of finite type for matrix $A$.\medskip

A non-negative  matrix $A=(a_{ij})_{N\times N}\;(N\geq2)$ is irreducible if and only if for each pair $(i,j),\;1\leq i,j\leq N$, there exists a positive integer $k$ such that $a_{ij}^{(k)}>0$, where $a_{ij}^{(k)}$ denotes the $(i,j)$ entry of matrix $A^{k}$ [21]. Given an irreducible transition matrix $A=(a_{ij})_{N\times N}$, a finite
sequence $w=(s_1,\cdots,s_k)$ is called an allowable word of length $k$ for $A$ if $a_{s_{i}s_{i+1}}=1, \;1\leq i\leq k-1$, where $s_{1},s_{2},\cdots,s_{k}\in S$ [22]. The length $k$ of $w$ is often denoted by $|w|$.\medskip

Now, we introduce the following two properties of transition matrices:\medskip

\begin{lemma} ([22]) Let $A=(a_{ij})_{N\times N}$ be a transition matrix. Then, for
any $k\geq1$ and any $1\leq i, j \leq N$, there are exactly $a^{(k)}_{ij}$
allowable words of length $k +1$ for $A$, starting at $i$ and ending at $j$, in the form of $(i, s_{1}, s_{2},\cdots, s_{k-1}, j)$.
\end{lemma}

\begin{lemma} ({\rm[33, Theorem 2.2]}) Let $A=(a_{ij})_{N\times N}\;(N\geq2)$ be an irreducible transition matrix
with $\sum\limits_{j=1}^{N}a_{i_{0}j}\geq2$ for some $1\leq i_{0}\leq N$. Then
\begin{itemize}
\item [{\rm (a)}] for any $1\leq i,j\leq N$ and any positive integer $l$, there exists at least one allowable word
$w=(i,\cdots, j)$ for $A$ such that $|w|>l$;
\item [{\rm (b)}] for any given allowable word $w=(b_{1}, b_{2},\cdots, b_{k})$ for $A$, if $k>2N\times(N^{2}-2N+2)$, then there exists another different allowable word  $w'=(c_{1}, c_{2},\cdots, c_{k})$ for $A$ with $c_{1}=b_{1}$ and $c_{k}=b_{k}$.
\end{itemize}
\end{lemma}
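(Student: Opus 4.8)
The plan is to reduce both parts to positivity properties of powers of $A$ by means of Lemma~2.5, which identifies the number of allowable words of length $k+1$ from $i$ to $j$ with the entry $a^{(k)}_{ij}$ of $A^{k}$. Part~(a) needs only irreducibility (the extra hypothesis $\sum_{j}a_{i_0j}\ge 2$ is not used here). Indeed, irreducibility applied to the pair $(i,i)$ produces $c\ge 1$ with $a^{(c)}_{ii}>0$, hence an allowable closed word $(i,\dots,i)$ of length $c+1$; concatenating $s$ copies of it and then appending an allowable word $(i,\dots,j)$ (which exists by irreducibility) gives an allowable word from $i$ to $j$ of length $sc+1+d$ with $d\ge 0$ fixed, and this exceeds $l$ once $s$ is large enough.

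For part~(b), write $w=(b_1,\dots,b_k)$. By Lemma~2.5 it is enough to prove $a^{(k-1)}_{b_1b_k}\ge 2$: it is $\ge 1$ because $w$ exists, and once it is $\ge 2$ there are two distinct allowable words of length $k$ from $b_1$ to $b_k$, one of them being $w$, so the other is the desired $w'$. The idea is to split $k-1=g_1+g_2$ and exploit the identity $a^{(k-1)}_{b_1b_k}=\sum_{v=1}^{N}a^{(g_1)}_{b_1v}\,a^{(g_2)}_{vb_k}$, choosing $g_1,g_2$ large enough that several of the summands are forced to be positive. When $A$ is primitive this is immediate: Wielandt's theorem gives $A^{m}>0$ entrywise for every $m\ge N^2-2N+2$, and since $k-1>2N(N^2-2N+2)$ we may take $g_1,g_2\ge N^2-2N+2$, so all $N\ge 2$ summands are positive and $a^{(k-1)}_{b_1b_k}\ge N$.

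In the general case let $\delta$ be the period of $A$ and $\{1,\dots,N\}=V_0\cup\cdots\cup V_{\delta-1}$ the cyclic decomposition supplied by Perron--Frobenius theory, with all edges running from $V_r$ to $V_{r+1}$ (indices mod $\delta$). The hypothesis $\sum_{j}a_{i_0j}\ge 2$ forbids $A$ from being a permutation matrix, which forces some class $V_{r^*}$ to satisfy $|V_{r^*}|\ge 2$ (when $\delta=1$ this holds trivially with $V_{r^*}$ the whole index set). Since the cyclic blocks of $A^{\delta}$ are primitive, Wielandt's bound applied to each block yields a threshold $M$ such that $a^{(m)}_{ij}>0$ whenever $m\ge M$ and $m$ is congruent mod $\delta$ to the class-offset from $i$ to $j$. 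One then chooses $g_1\in[M,\,k-1-M]$ congruent mod $\delta$ to the offset from the class of $b_1$ to $V_{r^*}$; the surviving summands above are exactly those with $v\in V_{r^*}$, each positive, so $a^{(k-1)}_{b_1b_k}\ge|V_{r^*}|\ge 2$, completing part~(b).

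I expect the imprimitive case to be the main obstacle. There one needs the facts that the cyclic blocks of $A^{\delta}$ are primitive and that their Wielandt exponents are controlled, and then one must check that the threshold $M$ still satisfies $2M<k-1$ and that the interval $[M,\,k-1-M]$ has length at least $\delta$, so that the required residue class mod $\delta$ can be realized. For cyclic blocks of unequal sizes this threshold can be of order $N^{3}$ rather than the Wielandt order $N^{2}$, and this is precisely why the hypothesis carries the factor $2N$ in front of $N^{2}-2N+2$ instead of using the Wielandt constant alone. The out-degree hypothesis is used only to exclude $A$ being a permutation matrix; in that excluded case the statement genuinely fails, since then there is exactly one allowable word of any prescribed length with prescribed endpoints.
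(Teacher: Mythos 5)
The paper itself gives no proof of this lemma: it is quoted verbatim from [33, Theorem~2.2], so there is no in-house argument to measure you against. Taken on its own terms, your proposal is structurally sound, and the parts you actually write out are correct. Part (a) indeed needs only irreducibility: a closed allowable word at $i$ of some length $c\ge 1$ exists because $a^{(c)}_{ii}>0$ for some $c$, and concatenating $s$ copies of it with a connecting word to $j$ produces arbitrarily long words. In the primitive case of (b), splitting $a^{(k-1)}_{b_1b_k}=\sum_{v}a^{(g_1)}_{b_1v}a^{(g_2)}_{vb_k}$ with $g_1,g_2\ge N^2-2N+2$ (possible since $k-1\ge 2N(N^2-2N+2)\ge 2(N^2-2N+2)$) gives $a^{(k-1)}_{b_1b_k}\ge N\ge 2$, and Lemma~2.5 then supplies the second word $w'$. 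Your closing remark is also right: the row-sum hypothesis only rules out the $N$-cycle permutation matrix, for which (b) genuinely fails while (a) survives.

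The one place where you have a plan rather than a proof is exactly the place you flag: the imprimitive case of (b), where the inequality $2M<k-1$ and the realizability of the residue class are asserted as ``to be checked.'' That check does close, and you should carry it out. With period $\delta\ge 2$ and $n_r=|V_r|$, each block $(A^{\delta})|_{V_r}$ is primitive, so it has all entries positive at powers $m\ge (n_r-1)^2+1$; splicing such a block power with a connecting segment of length $t\le\delta-1$ shows $a^{(g)}_{iv}>0$ for every $v\in V_{r^*}$ once $g\ge M:=\delta\bigl(\max_r(n_r-1)^2+1\bigr)+\delta-1$ and $g$ lies in the correct residue class, and symmetrically for the second leg. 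The two congruence constraints on $g_1$ and $g_2=k-1-g_1$ are automatically compatible because $w$ itself witnesses that $k-1$ is congruent mod $\delta$ to the total class offset from $b_1$ to $b_k$. Since $\max_r n_r\le N-\delta+1$, one gets $2M+\delta-1\le 2\delta(N-\delta)^2+5\delta-3$, whose maximum over $2\le\delta\le N-1$ is of order $\tfrac{8}{27}N^{3}$, well below $k-1\ge 2N(N^2-2N+2)\approx 2N^{3}$; hence the interval $[M,\,k-1-M]$ has length at least $\delta-1$ and contains an admissible $g_1$, the $|V_{r^*}|\ge 2$ summands over $v\in V_{r^*}$ are all positive, and $a^{(k-1)}_{b_1b_k}\ge 2$. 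With that computation inserted, your argument is a complete and correct proof.
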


\begin{definition} ([26, Definition 2.10]) Assume that $\bigcap\limits_{n=0}^{\infty}D_{n}$ is not empty,
and $A=(a_{ij})_{N\times N}$ is a transition matrix ($N\geq2$). If there exist $N$ subsets $V_{i}$ $(1\leq i\leq N)$ of $D$ with
$V_{i}\cap V_{j}=\partial_{D} V_{i}\cap\partial_{D} V_{j}$ for each pair $(i, j) ,\; 1\leq i\neq j\leq N$, such that
$$f_{n}(V_{i})\supset {\bigcup_{a_{ij}=1}}V_{j},\;\; 1\leq i\leq N,\;\; n\geq 0,$$
where $\partial_{D} V_{i}$ is the relative boundary of $V_{i}$ with respect to $D$, then system (1.1) (or the sequence of maps $\{f_{n}\}_{n=0}^{\infty}$)
is said to be coupled-expanding for matrix $A$ (or $A$-coupled-expanding) in $V_{i}, 1\leq i\leq N$.
Further, system (1.1) (or the sequence of maps $\{f_{n}\}_{n=0}^{\infty}$) is said to be strictly coupled-expanding for matrix $A$
(or strictly $A$-coupled-expanding) in $V_{i}$, $1\leq i\leq N$, if $d(V_{i}, V_{j})>0$ for all $1\leq i\neq j\leq N$.
In the special case that $a_{ij}=1$ for all $1\leq i,j\leq N$,
it is briefly said to be coupled-expanding or strictly coupled-expanding in $V_{i}, 1\leq i\leq N$.
\end{definition}

Coupled-expansion is a powerful tool to determine whether a system is chaotic
since it can be done directly by properties of the maps governing the system.
The following two results are existing criteria of chaos induced by strictly $A$-coupled-expansion for time-varying systems and autonomous systems, respectively.

\begin{lemma} ({\rm[26, Theorem 4.1]}) Let $(X,d)$ be a complete metric space. Assume that there exists an integer $n_0\geq0$ such that
$\bigcap\limits_{n=n_0}^{\infty}D_{n}\neq\emptyset$, and assume that there exist $N\;(N\geq2)$ bounded and closed subsets $V_{j}$
of $\bigcap\limits_{n=n_0}^{\infty}D_{n}$,\;$1\leq j\leq N$, with $d(V_{i},V_{j})>0,\;1\leq i\neq j\leq N$, such that
$f_{n}$ are continuous in $E:=\bigcup\limits_{j=1}^{N}V_{j}$ for all $n\geq n_0$. If
\begin{itemize}
\item [{\rm (i)}] $(f_{n_{0}-1}\circ f_{n_{0}-2}\circ\cdots\circ f_{0})(D_0)\supset\bigcup\limits_{j=1}^{N}V_{j}$;
\item [{\rm (ii)}] there exists $N\times N$ irreducible transition matrix $A$ with $\sum\limits_{j=1}^{N}(A)_{i_{0}j}\geq2$ for some $i_{0}$ such that
system {\rm(1.1) ($n\geq n_0$)} is strictly $A$-coupled-expanding in $V_{j}$,\;$1\leq j\leq N$;
\item [{\rm (iii)}] there exist constants $\mu\ge\lambda>1$ such that for all $n\geq n_0$,
$$\lambda d(x,y)\leq d(f_{n}(x),f_{n}(y))\leq\mu d(x,y),\;\forall\;x,y\in V_{j},\; 1\leq j\leq N.             \eqno(2.2)$$
\end{itemize}
Then system {\rm(1.1)} is chaotic in the strong sense of Li-Yorke.
\end{lemma}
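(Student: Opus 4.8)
The plan is to reduce to Lemma 2.3 by constructing, for some shift $n_0$, an uncountable bounded $\delta$-scrambled set $S\subset D_{n_0}$ for the shifted system $x_{n+1}=f_{n_0+n}(x_n)$, and then invoke hypothesis (i) together with Lemma 2.3 to transfer chaos back to (1.1). So from the outset I would work with the shifted sequence of maps, relabelling so that effectively $n_0=0$ and $\bigcap_{n\ge0}D_n\supset E=\bigcup_{j=1}^N V_j$, with each $f_n$ continuous on $E$, strictly $A$-coupled-expanding on the $V_j$, and satisfying the expansion/Lipschitz bound $(2.2)$.

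The heart of the argument is a coding construction. For each infinite allowable sequence $s=(s_0,s_1,s_2,\dots)\in\Sigma_N^+(A)$, I want to produce a point $x^{(s)}_0\in V_{s_0}$ whose orbit under $\{f_n\}$ visits $V_{s_n}$ at time $n$. The key step is the nested-intersection argument: using $f_n(V_{s_n})\supset V_{s_{n+1}}$ (valid since $a_{s_ns_{n+1}}=1$) and continuity, define $W^{(s)}_k:=\{x\in V_{s_0}: f_{j-1}\circ\cdots\circ f_0(x)\in V_{s_j}\text{ for }1\le j\le k\}$, a nested decreasing sequence of nonempty closed subsets of the bounded set $V_{s_0}$. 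By the lower bound in $(2.2)$, the composition $f_{k-1}\circ\cdots\circ f_0$ expands distances on $W^{(s)}_k$ by a factor $\ge\lambda^k$, so $d(W^{(s)}_k)\le \lambda^{-k}\,d(V_{s_k})\le \lambda^{-k}\max_j d(V_j)\to0$. Lemma 2.4 then gives $\bigcap_k W^{(s)}_k=\{x^{(s)}_0\}$, a single point, with bounded orbit $(x^{(s)}_n)$ satisfying $x^{(s)}_n\in V_{s_n}$ for all $n$.

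Next I would extract an uncountable $\delta$-scrambled set. Set $\delta_0:=\min_{i\ne j}d(V_i,V_j)>0$ by strict coupled-expansion. Whenever $s_n\ne t_n$ we get $d(x^{(s)}_n,x^{(t)}_n)\ge\delta_0$, so to get the $\limsup$ condition (iii) of Definition 2.1 it suffices that $s,t$ differ in infinitely many coordinates. For the $\liminf$ condition (i) I need the orbits to come arbitrarily close infinitely often; here I would arrange the coding sequences to agree on arbitrarily long blocks, and on such a block of length $m$ starting at time $n$ both orbits lie in the same $V_{s_n}$ and get squeezed together: backwards-expansion gives $d(x^{(s)}_n,x^{(t)}_n)\le\lambda^{-m}d(V_{s_{n+m}})\to0$ as the block length grows, forcing $\liminf d(x^{(s)}_n,x^{(t)}_n)=0$. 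To obtain an \emph{uncountable} family of sequences $s\in\Sigma_N^+(A)$ that pairwise agree on arbitrarily long blocks yet pairwise disagree infinitely often, I would use the structural facts in Lemma 2.6: part (a) produces allowable words between any two symbols of arbitrarily large length, and part (b) guarantees (since some $\sum_j a_{i_0j}\ge2$) branching — two distinct allowable words with the same endpoints — which lets me encode an infinite binary string into an allowable sequence. Concatenating long "synchronization" words with independently chosen branch segments, indexed by a binary tree, yields a Cantor-set-indexed (hence uncountable) family $\{s^{(\omega)}:\omega\in\{0,1\}^{\mathbb N}\}$ with the required agreement/disagreement pattern; the map $\omega\mapsto x^{(s^{(\omega)})}_0$ is injective (distinct sequences force $d\ge\delta_0$ at some coordinate), so $S:=\{x^{(s^{(\omega)})}_0\}$ is the desired uncountable bounded $\delta$-scrambled set with $\delta:=\delta_0/2$, say.

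Finally, with $S\subset V_{s_0}\subset D_0$ (in the shifted indexing, $D_{n_0}$) an uncountable bounded $\delta$-scrambled set for the shifted system, hypothesis (i) says $(f_{n_0-1}\circ\cdots\circ f_0)(D_0)\supset\bigcup_j V_j\supset S$, so Lemma 2.3 applies and gives that the original system (1.1) is chaotic in the strong sense of Li-Yorke. The main obstacle I anticipate is the combinatorial bookkeeping in the coding step: one must simultaneously (a) keep the family uncountable, (b) force infinitely many long common blocks for \emph{every} pair, and (c) force infinitely many disagreements for \emph{every} pair, all while staying inside $\Sigma_N^+(A)$ — this is exactly where Lemma 2.6 is essential, and care is needed because the branching vertex $i_0$ may not be freely reachable, so the synchronization words from Lemma 2.6(a) must be interleaved to route the orbit through $i_0$ infinitely often. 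Everything else — the nested intersections, the diameter estimates, injectivity — is routine once $(2.2)$ and Lemma 2.4 are in hand.
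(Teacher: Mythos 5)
Your proposal is correct and follows essentially the same strategy the paper uses for its generalization of this lemma (the proof of Theorem 3.1): symbolic coding via $\Sigma_N^+(A)$, nested closed sets with diameters controlled by the expansion constant and collapsed to singletons via Lemma 2.4, an uncountable family of itineraries built from the branching words of Lemma 2.6 that share arbitrarily long common blocks (giving $\liminf=0$) yet differ infinitely often (giving $\limsup\geq\delta$), and finally Lemma 2.3 to pull the scrambled set back through hypothesis (i). Note that the paper itself only quotes this lemma from [26]; under the full two-sided bound (2.2) your argument even simplifies the paper's Step 1, since the uniform lower bound $\lambda>1$ on every $V_j$ makes the special word through $j_0$ unnecessary.
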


\begin{lemma} ({\rm[33, Theorem 3.2]}) Let $(X, d)$ be a complete metric space, and $V_1,\cdots,V_N$ be disjoint bounded and closed nonempty sets of $X$
with $d(V_{i},V_{j})>0,\;1\leq i\neq j\leq N$. Let $A =(a_{ij})_{N\times N}$ be an irreducible transition matrix with $\sum\limits_{j=1}^{N}a_{i_{0}j}\geq2$
for some $1\leq i_0\leq N$. Suppose that a map $f : D :=\bigcup\limits_{i=1}^{N}V_{i}\rightarrow X$ is continuous and satisfies the following
\begin{itemize}
\item [{\rm (i)}] the map $f$ is strictly $A$-coupled-expanding in $V_1,\cdots,V_N$;
\item [{\rm (ii)}] there exist positive constants $\lambda>1$, $\mu$ and an integer $1\leq j_{0}\leq N$ such that
$$d(f(x),f(y))\geq \lambda d(x,y),\;\forall\;x,y\in V_{j_{0}},                                                \eqno(2.3)$$
$$d(f(x),f(y))\geq \mu d(x,y),\;\forall\;x,y\in V_{j},\;j\neq j _{0};                                         \eqno(2.4)$$      and
$$\lambda\mu^{k_{0}-1}>1;                                                                                     \eqno(2.5)$$
where $k_{0}$ is the minimal positive integer such that
$$a_{j_{0}j_{0}}^{(k_{0})}>0.                                                                                 \eqno(2.6)$$
\end{itemize}
Then
\begin{itemize}
\item [{\rm (1)}] the map $f$ has an uncountable and bounded scrambled set $S \subset D$, so $f$ is chaotic in the sense of Li-Yorke ;
\item [{\rm (2)}] there exists an uncountable, perfect, bounded, and closed set $E\subset D$ such that $S\subset E$, $f(E)= E$, and $f$ is chaotic on $E$ in the sense of Devaney ;
\item [{\rm (3)}] there exist positive integers $N_1$ and $N_2$ such that for any integer $l\geq N_1$, $f$ has a $k$-periodic point in $E$, with
$k = lk_0 + N_2$ as the minimal period.
\end{itemize}
\end{lemma}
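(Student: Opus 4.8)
The plan is to build a symbolic coding of the dynamics restricted to $D=\bigcup_{i=1}^N V_i$ and then transfer the known chaotic behavior of the subshift $\sigma_A$ to $f$ via this coding. First I would define, for each admissible point $s=(s_0,s_1,\dots)\in\Sigma_N^+(A)$, the nested sequence of sets
\[
 V_{s_0\cdots s_n}:=V_{s_0}\cap f^{-1}(V_{s_1})\cap\cdots\cap f^{-n}(V_{s_n}),
\]
and show each is nonempty, bounded, and closed. Nonemptiness comes from the strict $A$-coupled-expansion hypothesis (i): since $f(V_{s_j})\supset\bigcup_{a_{s_js_\ell}=1}V_\ell\supset V_{s_{j+1}}$, one pulls back inductively starting from $V_{s_n}$; closedness uses continuity of $f$ and closedness of the $V_i$; boundedness follows because the sets shrink. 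The key quantitative point is that the diameter $d(V_{s_0\cdots s_n})\to 0$ as $n\to\infty$: the expansion bounds (2.3)–(2.4) say $f$ multiplies distances on $V_{j_0}$ by at least $\lambda$ and on every other $V_j$ by at least $\mu$, so if among $s_0,\dots,s_{n-1}$ the symbol $j_0$ appears $p$ times and other symbols $n-p$ times, then $d(V_{s_0\cdots s_n})\le \lambda^{-p}\mu^{-(n-p)} \cdot d(X)$-type estimate—except $\mu$ need not exceed $1$. This is where condition (2.5)–(2.6) enters: because $A$ is irreducible with $a_{j_0 j_0}^{(k_0)}>0$ and $k_0$ minimal, along any admissible word the symbol $j_0$ cannot be absent for more than roughly $k_0-1$ consecutive steps without... actually more carefully, one argues that in any sufficiently long admissible block the proportion of occurrences of $j_0$ is bounded below, so a suitable product $\lambda^{a}\mu^{b}$ with $a/b$ controlled by $k_0$ beats $1$ by (2.5); hence by Lemma 2.3 each $\bigcap_n V_{s_0\cdots s_n}$ is a single point, call it $h(s)$.

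Next I would verify that $h:\Sigma_N^+(A)\to D$ is continuous, injective, satisfies $f\circ h=h\circ\sigma_A$ (semiconjugacy, in fact conjugacy onto its image because the $V_i$ are disjoint with positive pairwise distance so the itinerary of a point is uniquely determined), and that $E:=h(\Sigma_N^+(A))$ is compact—being the continuous image of the compact set $\Sigma_N^+(A)$—hence bounded and closed, and perfect because $\Sigma_N^+(A)$ is perfect (here irreducibility plus $\sum_j a_{i_0 j}\ge 2$ guarantees $\Sigma_N^+(A)$ has no isolated points) and $h$ is a homeomorphism onto $E$. Since $f(E)=h(\sigma_A(\Sigma_N^+(A)))=h(\Sigma_N^+(A))=E$ by invariance of $\Sigma_N^+(A)$ under $\sigma_A$, the set $E$ is $f$-invariant. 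Devaney chaos of $\sigma_A$ on $\Sigma_N^+(A)$—transitivity from irreducibility, density of periodic points, sensitivity—transfers through the conjugacy $h$ to give conclusion (2); Li-Yorke chaos and the uncountable scrambled set $S\subset E$ in conclusion (1) transfer the same way, using that a conjugacy between compact systems preserves proximal/asymptotic pairs and that $\sigma_A$ has an uncountable scrambled set (standard for subshifts containing $\Sigma_2^+$-like behavior, which the hypothesis $\sum_j a_{i_0 j}\ge2$ together with Lemma 2.5 provides).

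For conclusion (3) I would use Lemma 2.5(a): for any length $l$ there is an admissible word from $j_0$ to $j_0$ of length exceeding $l$; more precisely one shows there exist integers $N_1,N_2$ such that for every $l\ge N_1$ there is an admissible cycle at $j_0$ of length exactly $lk_0+N_2$ whose minimal period (as a periodic point of $\sigma_A$) is exactly $lk_0+N_2$—this is a combinatorial statement about the transition graph, using minimality of $k_0$ and the extra branching at $i_0$ from Lemma 2.5(b) to rule out shorter periods. The corresponding periodic point of $\sigma_A$ maps under $h$ to a periodic point of $f$ in $E$ with the same minimal period. The main obstacle I anticipate is the diameter-shrinking estimate: getting the bound $d(V_{s_0\cdots s_n})\to0$ uniformly over all $s$ requires a careful counting argument showing that the density of the symbol $j_0$ in any long admissible word is bounded below by a constant depending only on $k_0$ and $N$, and then checking that the resulting growth rate $\lambda^{\text{density}\cdot n}\mu^{(1-\text{density})\cdot n}$ exceeds $1$ precisely under hypothesis (2.5)—reconciling the "every $k_0$ steps" combinatorics with the exponents $\lambda,\mu^{k_0-1}$ is the delicate bookkeeping step. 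Everything else is a fairly standard transfer of chaos through a topological conjugacy with a subshift of finite type.
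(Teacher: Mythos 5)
There is a genuine gap at the heart of your argument: the uniform diameter-shrinking claim for \emph{all} admissible itineraries is false under the stated hypotheses. You assert that, because $A$ is irreducible and $k_0$ is the minimal integer with $a_{j_0j_0}^{(k_0)}>0$, the symbol $j_0$ must appear with positive density (bounded below in terms of $k_0$) in every long allowable word. Neither irreducibility nor minimality of $k_0$ gives this: irreducibility only says every state can reach every other state, and minimality of $k_0$ only bounds the \emph{shortest} return time to $j_0$ from below; nothing prevents an admissible sequence from avoiding $j_0$ for arbitrarily long stretches, or forever (e.g.\ a cycle or self-loop among the other states). Since $\mu$ is merely positive and may be $\le 1$, along such an itinerary the cylinders $V_{s_0\cdots s_n}$ need not contract at all, so their intersection may fail to be a single point — indeed, because the $V_i$ are only bounded and closed (not compact), Lemma 2.4 requires the diameters to tend to $0$ even to conclude the intersection is nonempty. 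Consequently the coding map $h$ is not defined on all of $\Sigma_N^+(A)$, and everything downstream — the conjugacy with the full subshift, $E=h(\Sigma_N^+(A))$, the transfer of Devaney and Li--Yorke chaos, and the periodic-point count — collapses as written.

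The repair, which is exactly the mechanism the paper uses in its proof of Theorem 3.1 (explicitly modeled on the proof of this lemma), is to abandon the full subshift and work only with itineraries that return to $j_0$ at controlled times. One fixes the allowable word $(j_0,b_1,\dots,b_{k_0-1},j_0)$ coming from $a_{j_0j_0}^{(k_0)}>0$ and its infinite periodic concatenation $\alpha$; over each block of $k_0$ symbols of $\alpha$ the composition contracts diameters by the factor $(\lambda\mu^{k_0-1})^{-1}<1$ guaranteed by (2.5), so the cylinders along $\alpha$ do shrink. The uncountable family of itineraries is then built by interleaving increasingly long runs of $\alpha$-blocks with the branching words $w_0,w_1,w_2$ (from Lemma 2.6) at sparse positions chosen, as in (3.7), so that the accumulated contraction along the $\alpha$-runs dominates whatever expansion of diameters occurs during the interleaved blocks. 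The scrambled set consists of the (unique, by Lemma 2.4) points with these special itineraries; the $\limsup\ge\delta$ part comes from the two itineraries disagreeing at infinitely many positions, and the $\liminf=0$ part from the shared long $\alpha$-runs. Your outline for conclusions (2) and (3) would likewise have to be rebuilt on a sub-collection of itineraries on which the coding actually converges, rather than on $\Sigma_N^+(A)$ itself.
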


Since $A$ is an irreducible transition matrix, the existence of $k_{0}$ in (2.6) is obvious.

Comparing these two Lemmas, it is not difficult to find that assumption (iii) in Lemma 2.8 for time-varying systems is much stronger than assumption (ii) in Lemma 2.9 for autonomous systems since the control of upper bounds is required in (2.2) and the control of lower bounds in (2.2) is also stronger than that in (2.3)-(2.6). In the present paper, we try to relax this assumption in the following two sections.

\section{ Li-Yorke chaos induced by strict $A$-coupled-expansion in bounded and closed subsets in complete metric spaces}

\begin{theorem}
       Let all the assumptions in Lemma 2.8 hold except that assumption $\rm{(iii)}$ is replaced by
\begin{itemize}
\item [{\rm (iii$_{a}$)}] there exist positive constants $\lambda>1,\;\mu,$ and an integer $1\leq j_{0}\leq N$, such that for all $n\geq n_0$,
$$d(f_{n}(x),f_{n}(y))\geq \lambda d(x,y),\;\forall\;x,y\in V_{j_{0}},                              \ \ \ \ \ \eqno(3.1)$$
$$d(f_{n}(x),f_{n}(y))\geq \mu d(x,y),\;\forall\;x,y\in V_{j},\;j\neq j _{0};                                 \eqno(3.2)$$
and (2.5) and (2.6) hold.
\end{itemize}
Then system {\rm(1.1)} is chaotic in the strong sense of Li-Yorke.
\end{theorem}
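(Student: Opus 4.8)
The plan is to carry out the symbolic–dynamics argument behind Lemma 2.9 along the sequence $\{f_n\}$, after reducing to the shifted system via Lemma 2.2. By Lemma 2.2 it is enough to exhibit, for the system $x_{n+1}=f_{n_0+n}(x_n)$, an uncountable $\delta$-scrambled set $S$ all of whose orbits are bounded: such an $S$ will lie in $E:=\bigcup_{j=1}^{N}V_j$, and hypothesis (i) of Lemma 2.8 gives $(f_{n_0-1}\circ\cdots\circ f_0)(D_0)\supset E\supset S$, so Lemma 2.2 then delivers strong Li–Yorke chaos for (1.1). Relabel so that $n_0=0$ (all hypotheses on $\{f_n\}_{n\ge n_0}$ are inherited), write $F_k:=f_{k-1}\circ\cdots\circ f_0$ with $F_0=\mathrm{id}$, and for an allowable sequence $s=(s_0,s_1,\dots)\in\Sigma_N^+(A)$ set $V_s^n:=\{x\in V_{s_0}:F_k(x)\in V_{s_k},\ 1\le k\le n\}$. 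Pulling points back through the inclusions $f_k(V_{s_k})\supset V_{s_{k+1}}$ (valid since $a_{s_ks_{k+1}}=1$) shows $V_s^n\ne\emptyset$; continuity of the $f_n$ on $E$ and closedness of the $V_j$ make each $V_s^n$ closed, and $\emptyset\ne V_s^{n+1}\subset V_s^n\subset V_{s_0}$, so each is bounded. From (3.1)--(3.2), for $x,y\in V_s^n$ one has $d(F_n x,F_n y)\ge\big(\prod_{k=0}^{n-1}\ell_k\big)d(x,y)$ with $\ell_k=\lambda$ if $s_k=j_0$ and $\ell_k=\mu$ otherwise, whence $\mathrm{diam}\,V_s^n\le M\big/\prod_{k=0}^{n-1}\ell_k$ with $M:=\max_j\mathrm{diam}\,V_j<\infty$. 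Thus, as soon as $\prod_{k=0}^{n-1}\ell_k\to\infty$, Lemma 2.3 extracts a single point $x(s)\in\bigcap_n V_s^n\subset V_{s_0}$ whose orbit follows $s$, i.e. $F_k(x(s))\in V_{s_k}$ for all $k$, and that orbit lies in the bounded set $E$.

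The point is to choose itineraries for which $\prod_{k=0}^{n-1}\ell_k\to\infty$, using (2.5)--(2.6). Let $a=(j_0,\dots,j_0)$ be an allowable word of length $k_0+1$ realizing (2.6); using (3.1)--(3.2), the factors $\ell_k$ along one copy of $a$ multiply to at least $\theta:=\lambda\mu^{k_0-1}$, and $\theta>1$ by (2.5). Put $e_1:=a^{\,r}$ (the $r$-fold self-concatenation of $a$, glued at $j_0$), an allowable loop at $j_0$ of word-length $rk_0+1$; choose $r$ with $rk_0+1>2N(N^2-2N+2)$, so Lemma 2.6(b) yields a different allowable loop $e_2\ne e_1$ at $j_0$ of the \emph{same} length. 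For a sequence $\beta=(\beta_1,\beta_2,\dots)$ with each $\beta_i\in\{1,2\}$, let $s^{(\beta)}$ be the allowable one-sided sequence obtained by concatenating (gluing at $j_0$) the words $a^{\,m_1},e_{\beta_1},a^{\,m_2},e_{\beta_2},\dots$, where $m_k\to\infty$. Over a stage ``$a^{\,m_k}$ then $e_{\beta_k}$'' the running product $\prod_{k<n}\ell_k$ is multiplied by at least $\theta^{m_k}c_0$ for a fixed constant $c_0>0$ (the worst-case contribution of an excursion), and it never dips within a stage by more than a fixed factor; choosing the $m_k$ large enough that $\theta^{m_k}c_0\ge 2$ forces $\prod_{k<n}\ell_k\to\infty$. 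Hence $x(\beta):=x(s^{(\beta)})$ is well defined, with bounded orbit following $s^{(\beta)}$.

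It remains to assemble the scrambled set. Fix an uncountable family $\mathcal F$ of such $\beta$'s whose members pairwise disagree in infinitely many coordinates (for instance from an almost disjoint family of infinite sets of positive integers), and put $S:=\{x(\beta):\beta\in\mathcal F\}\subset V_{j_0}\subset D_0$. Fix distinct $\beta,\beta'\in\mathcal F$. At the initial index $n_k$ of the $k$-th word $a^{\,m_k}$, both $F_{n_k}x(\beta)$ and $F_{n_k}x(\beta')$ lie in $V_{j_0}$ and their next $m_kk_0$ images follow $a^{\,m_k}$, so by the same estimate as above $d(F_{n_k}x(\beta),F_{n_k}x(\beta'))\le\theta^{-m_k}M\to 0$; hence $\liminf_n d(F_n x(\beta),F_n x(\beta'))=0$. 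Since $|e_1|=|e_2|$, all $s^{(\gamma)}$ share the same block/excursion positions, so whenever $\beta_k\ne\beta'_k$ --- which happens for infinitely many $k$ --- the loops $e_{\beta_k}\ne e_{\beta'_k}$ differ at some coordinate, giving a position $m$ with $s^{(\beta)}_m\ne s^{(\beta')}_m$; then $F_m x(\beta)$ and $F_m x(\beta')$ lie in different $V_i$'s, so $d(F_m x(\beta),F_m x(\beta'))\ge\delta_0:=\min_{i\ne j}d(V_i,V_j)>0$, whence $\limsup_n d(F_n x(\beta),F_n x(\beta'))\ge\delta_0$. Therefore $S$ (its points pairwise distinct by the last bound) is an uncountable $(\delta_0/2)$-scrambled set with bounded orbits for the shifted system, and Lemma 2.2 completes the proof.

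The main obstacle is the non-uniformity of the expansion: off $V_{j_0}$ we only control $d(f_n(x),f_n(y))\ge\mu\,d(x,y)$ with $\mu$ possibly $\le 1$, so the tubes $V_s^n$ need not shrink for an arbitrary itinerary and no point $x(s)$ can be extracted. The remedy --- restricting to itineraries that pass through the return loop $a$ at $j_0$ often enough, so that the net expansion per loop is $\theta=\lambda\mu^{k_0-1}>1$ by (2.5)--(2.6) --- must be reconciled with retaining enough combinatorial freedom for an uncountable scrambled family, and that freedom is exactly what the equal-length companion word of Lemma 2.6(b) supplies. Verifying both requirements at once, in particular the bookkeeping that $\prod_{k<n}\ell_k$ genuinely diverges along each $s^{(\beta)}$ despite the mid-stage dips, is the technical heart of the argument.
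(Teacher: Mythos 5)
Your proof is correct and follows essentially the same route as the paper's: reduce to the shifted system via the paper's Lemma 2.3, build itineraries alternating long $j_0$-loop blocks (whose net expansion $\lambda\mu^{k_0-1}>1$ from (2.5)--(2.6) forces the nested cylinder sets to shrink) with a binary choice of equal-length excursion words supplied by Lemma 2.6(b), and extract the scrambled points by the Cantor-intersection Lemma 2.4. The only differences are bookkeeping: the paper enforces shrinking via the recursive condition (3.7) with the compensating factor $\mu_0^{h_{j-1}}$ and propagates a single disagreement to infinitely many by repeating the prefix $B_1\cdots B_{j-2}$ inside $\hat{\beta}$, whereas you track the divergence of the running product of expansion factors directly and instead require your family $\mathcal{F}$ to disagree in infinitely many coordinates --- both are sound.
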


\begin{proof}
    By Lemma 2.3, it suffices to show that system (2.1)
is chaotic in the strong sense of Li-Yorke. The rest of the proof is divided into three steps.

{\bf Step 1.} We claim that there exists $\alpha=(a_{0},a_{1},a_{2},\cdots)\in \sum_{N}^{+}(A)$ such that $d(V_{\alpha}^{l_{i},m+n_0})$ uniformly converges to 0 with respect to $m\geq0$ as $i\rightarrow\infty$, where $l_i=ik_0$, $k_0$ is specified by (2.5) and (2.6), and
$$V_{\alpha}^{n,i}:=\bigcap\limits_{k=0}^{n}f_{i}^{-k}(V_{a_{k}}),\;\; i\geq0,\eqno(3.3)$$
where $f_{i}^{k}=f_{i+k-1}\circ \cdots \circ f_{i+1}\circ f_{i}$,\;\; $f_{i}^{-k}=(f_{i}^{k})^{-1}$.

By (2.6) and Lemma 2.5 there exists an allowable word $(j_{0},b_{1},\cdots,b_{k_{0}-1},j_{0})$ with length $k_{0}+1$ for matrix $A$.
Set
$$\alpha=(a_{0},a_{1},a_{2},\cdots):=(j_{0},b_{1},\cdots,b_{k_{0}-1},j_{0},b_{1},\cdots,b_{k_{0}-1},j_{0},\cdots).\eqno(3.4)$$
Note that $a_{l_i}=j_{0}$ for $i\geq0$. Evidently, $\alpha\in \sum_{N}^{+}(A)$.
It follows from assumption (iii) that for any $i\geq 1$ and any $x,y\in V_{\alpha}^{l_{i},m+n_0}$,
$$d(f_{m+n_0}^{l_{i}}(x),f_{m+n_0}^{l_{i}}(y))\geq \lambda\mu^{k_{0}-1}d(f_{m+n_0}^{l_{i-1}}(x),f_{m+n_0}^{l_{i-1}}(y))\geq (\lambda\mu^{k_{0}-1})^{i}d(x,y),$$
which implies that
$$d(x,y)\leq (\lambda\mu^{k_{0}-1})^{-i}d(f_{m+n_0}^{l_{i}}(x),f_{m+n_0}^{l_{i}}(y))\leq (\lambda\mu^{k_{0}-1})^{-i}d(V_{a_{l_{i}}})=
(\lambda\mu^{k_{0}-1})^{-i}d(V_{j_{0}}).$$
Thus
$$d(V_{\alpha}^{l_{i},m+n_0})\leq (\lambda\mu^{k_{0}-1})^{-i}d(V_{j_{0}}), \;i\geq 1.$$
This, together with (2.5), yields that $d(V_{\alpha}^{l_{i},m+n_0})$ uniformly converges to 0 with respect to $m\geq0$ as $i\rightarrow\infty$.

{\bf Step 2.} Construct the scrambled set $S$.

Since $A$ is a transition matrix, there exist $1 \leq t_{0},m_{0}\leq N$ such that
$a_{t_{0}j_{0}}=a_{j_{0}m_{0}}=1$. Moreover, $A$ is irreducible and satisfies that $\sum\limits_{j=1}^{N}a_{i_{0}j}\geq2$ for some $i_{0}$. So, by Lemma 2.6 there exist three allowable words for matrix $A$:
$$w_{0}=(c_{1},c_{2},\cdots,c_{m_{1}}),\; w_{1}=(d_{1},d_{2},\cdots,d_{m_{2}}),\; w_{2}=(e_{1},e_{2},\cdots,e_{m_{2}}),\ \ \ \ \ \eqno(3.5)$$
where
$$w_{1}\neq w_{2},\; c_{1}=m_{0},\; d_{1}=e_{1}=j_{0}, \;c_{m_{1}}=d_{m_{2}}=e_{m_{2}}=t_{0}.\ \ \ \ \ \eqno(3.6)$$
Set
$$\Omega:=\{\beta=(w_{0},B_{1},B_{2},\cdots):B_{i}= w_{1}\;{\rm or} \; w_{2}, \;\;i\geq 1\}.$$
Evidently, $\Omega$ is an uncountable subset of $\sum_{N}^{+}(A)$.

By Step 1, we have shown that $d(V_{\alpha}^{l_{i},m+n_0})$ uniformly converges to 0 with respect to $m\geq0$ as $i\rightarrow\infty$.
Thus, there exists a strictly increasing subsequence $\{l_{i_{j}}\}_{j=1}^{\infty}$ of $\{l_{i}\}_{i=1}^{\infty}$ such that for any $j\geq1$,
$$d(V_{\alpha}^{l_{i_{j}},h_{j-1}+n_0})\leq \mu_{0}^{h_{j-1}}2^{-j},\ \ \ \ \ \eqno(3.7)$$
where $\mu_{0}=\min\{\lambda,\mu\}$, and
$$h_{0}=0,\;\; h_{j}=\sum\limits_{t=1}^{j}l_{i_{t}}+jm_{1}+j(j-1)m_{2}/2+j,\;\; j\geq 1.$$
For any $\beta=(w_{0},B_{1},B_{2},\cdots)\in \Omega$, set
$$\hat{\beta}:=(a_{0},\cdots,a_{l_{i_{1}}},w_{0},a_{0},\cdots,a_{l_{i_{2}}},w_{0},B_{1},a_{0},\cdots,a_{l_{i_{3}}},w_{0},B_{1},B_{2},\cdots),\eqno(3.8)$$
where $a_j\;(j\geq0)$ is specified by (3.4). Then it follows from (3.5) and (3.6) that $\hat{\beta}\in \sum_{N}^{+}(A)$.

Next, it is to show that $\lim\limits_{n\rightarrow\infty}d(V_{\hat{\beta}}^{n,n_0})=0$.
For each $j\geq 1$, it follows from (3.8) that if $x\in V_{\hat{\beta}}^{h_{j}-1,n_0}$, then
$$f_{n_0}^{h_{j-1}+i}(x)\in V_{a_{i}},\;\;0\leq i\leq l_{i_{j}},$$
which implies that
$$f_{n_0}^{h_{j-1}}(x)\in V_{\alpha}^{l_{i_{j}},h_{j-1}+n_0},\;\; j\geq 1.$$
So, for any $x,y\in V_{\hat{\beta}}^{h_{j}-1,n_0}$, one has that
$$f_{n_0}^{h_{j-1}}(x),f_{n_0}^{h_{j-1}}(y)\in V_{\alpha}^{l_{i_{j}},h_{j-1}+n_0},\;\; j\geq 1.\ \ \ \ \ \eqno(3.9)$$
Thus
$$d(f_{n_0}^{h_{j-1}}(x),f_{n_0}^{h_{j-1}}(y))\leq d(V_{\alpha}^{l_{i_{j}},h_{j-1}+n_0}),\;\; j\geq 1.\ \ \ \ \ \eqno(3.10)$$
Further, by assumption (iii), (3.7), and (3.10) we get that for any $x,y\in V_{\hat{\beta}}^{h_{j}-1,n_0}$,
$$d(x,y)\leq \mu_{0}^{-h_{j-1}}d(f_{n_0}^{h_{j-1}}(x),f_{n_0}^{h_{j-1}}(y))\leq \mu_{0}^{-h_{j-1}}d(V_{\alpha}^{l_{i_{j}},h_{j-1}+n_0})
\leq 2^{-j},$$
which results in
$$d(V_{\hat{\beta}}^{h_{j}-1,n_0})\leq 2^{-j},\; j\geq 1.$$
Hence,
$$\lim_{j\rightarrow\infty}d(V_{\hat{\beta}}^{h_{j}-1,n_0})=0.$$
Noting the fact that $\{d(V_{\hat{\beta}}^{n,n_0})\}_{n=0}^{\infty}$ is a non-increasing sequence,
one obtains that
$$\lim_{n\rightarrow\infty}d(V_{\hat{\beta}}^{n,n_0})=0.\ \ \ \ \ \eqno(3.11)$$
By assumption (ii) and the continuity of $f_n$, it can be verified that $V_{\hat{\beta}}^{n,n_0}$ are  nonempty and
closed subsets of $V_{a_0}$ for all $n\geq0$, and they form a nested family of sets; that is,
$V_{\hat{\beta}}^{0,n_0}\supset V_{\hat{\beta}}^{1,n_0}\supset V_{\hat{\beta}}^{2,n_0}\supset\cdots$.
This, together with (3.11), yields that $\bigcap\limits_{n\geq 0}V_{\hat{\beta}}^{n,n_0}$ is a singleton set by Lemma 2.4.
Denote
$$\{x_{\hat{\beta}}\}:=\bigcap_{n\geq 0}V_{\hat{\beta}}^{n,n_0},\;\; S:=\{x_{\hat{\beta}}:\beta \in \Omega\}.$$
It is evident that $\hat{\beta_{1}}\neq \hat{\beta_{2}}$ if and only if $\beta_{1}\neq \beta_{2}$.
So, $x_{\hat{\beta_{1}}}\neq x_{\hat{\beta_{2}}}$ for any $\beta_{1},\beta_{2}\in\Omega$ with $\beta_{1}\neq \beta_{2}$. This shows that $S$ is an uncountable set.

{\bf Step 3.} Show that $S$ is a $\delta$-scrambled set of system (2.1), where $\delta$:= min$\{d(V_{i},V_{j}),$
$1\leq i \neq j \leq N\}>0$.

Fix any given $x_{\hat{\beta_{1}}},x_{\hat{\beta_{2}}} \in S$ with $x_{\hat{\beta_{1}}}\neq x_{\hat{\beta_{2}}}$.
Then there exist $\beta_{1}\neq\beta_{2}\in\Omega$ such that $\hat{\beta_{1}}$ and $\hat{\beta_{2}}$ are determined by $\beta_{1}$ and $\beta_{2}$, respectively, by using (3.8). So, it follows from (3.8) that there exist two different symbols $s_{1},s_{2}\in \{1,\cdots,N\}$ and an increasing sequence $\{n_{j}\}_{j=1}^{\infty}$ of positive integers such that $f_{n_0}^{n_{j}}(x_{\hat{\beta_{1}}})\in V_{s_{1}}$ and $f_{n_0}^{n_{j}}(x_{\hat{\beta_{2}}})\in V_{s_{2}}$,
which implies that
$$d(f_{n_0}^{n_{j}}(x_{\hat{\beta_{1}}}),f_{n_0}^{n_{j}}(x_{\hat{\beta_{2}}}))\geq d(V_{s_{1}},V_{s_{2}})\geq \delta, \;\;j\geq 1.$$
Thus, one has that
$$\limsup_{n\rightarrow\infty}d(f_{n_0}^{n}(x_{\hat{\beta_{1}}}),f_{n_0}^{n}(x_{\hat{\beta_{2}}}))\geq \delta.\ \ \ \ \ \eqno(3.12)$$
On the other hand, by (3.9) one has that
$$f_{n_0}^{h_{j-1}}(x_{\hat{\beta_{1}}}),f_{n_0}^{h_{j-1}}(x_{\hat{\beta_{2}}})\in V_{\alpha}^{l_{i_{j}},h_{j-1}+n_0}, \;\;j\geq 1.$$
Then
$$d(f_{n_0}^{h_{j-1}}(x_{\hat{\beta_{1}}}),f_{n_0}^{h_{j-1}}(x_{\hat{\beta_{2}}}))\leq d(V_{\alpha}^{l_{i_{j}},h_{j-1}+n_0}), \;\;j\geq 1.$$
Since $d(V_{\alpha}^{l_{i},m+n_0})$ uniformly converges to 0 with respect to $m$ as $i\rightarrow\infty$, we get that
$$\liminf_{n\rightarrow\infty}d(f_{n_0}^{n}(x_{\hat{\beta_{1}}}),f_{n_0}^{n}(x_{\hat{\beta_{2}}}))=0.\ \ \ \ \ \eqno(3.13)$$
Hence, $S$ is an uncountable $\delta$-scrambled set of system (2.1) by (3.12) and (3.13).

Moreover, for any $x_{n_0}\in S$, its orbit  $\{x_{n}\}_{n=n_0}^{\infty}$ under system (2.1) satisfies that $\{x_{n}\}_{n=n_0}^{\infty}\subset\bigcup\limits_{i=1}^{N}V_{i}$.
Then, $\{x_{n}\}_{n=n_0}^{\infty}$ is bounded since $\bigcup\limits_{i=1}^{N}V_{i}$ is bounded. Hence, all the orbits starting from the points in $S$ are bounded.
Therefore, system (2.1) is chaotic in the strong sense of Li-Yorke.

Consequently, system (1.1) is chaotic in the strong sense of Li-Yorke by assumption (i) and Lemma 2.3.
This completes the proof.
\end{proof}

\begin{remark} The method used in the proof of Theorem 3.1 is motivated by that of Lemma 2.9, in which
a strictly $A$-coupled-expanding map was studied.
\end{remark}

If $\min\{\lambda,\mu\}>1$ in assumption (iii) of Theorem 3.1, one has the following better result, which is a direct consequence of Theorem 3.1.

\begin{corollary} Let all the assumptions in Lemma 2.8 hold except that assumption $\rm{(iii)}$ is replaced by
\begin{itemize}
\item [${\rm (iii_{b})}$] there exists a positive constant $\lambda>1$ such that for all $n\geq n_{0}$,
$$d(f_{n}(x),f_{n}(y))\geq \lambda d(x,y),\;\forall\;x,y\in V_{j},\;\;1\leq j\leq N.$$
\end{itemize}
Then system {\rm(1.1)} is chaotic in the strong sense of Li-Yorke.
\end{corollary}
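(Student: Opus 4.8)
The plan is to derive Corollary 3.3 directly from Theorem 3.1 by verifying that assumption $(\mathrm{iii}_b)$ implies assumption $(\mathrm{iii}_a)$ with a suitable choice of the constants $\lambda$, $\mu$ and the index $j_0$. Since $(\mathrm{iii}_b)$ gives a uniform expansion rate $\lambda>1$ on every $V_j$, I would simply take the same $\lambda$ as the constant in $(\mathrm{iii}_a)$, set $\mu:=\lambda$, and pick $j_0$ to be any index $1\le j_0\le N$; the simplest choice is $j_0=i_0$, where $i_0$ is the index from assumption (ii) of Lemma 2.8 (the row with $\sum_j a_{i_0 j}\ge 2$), though any $j_0$ works. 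Then (3.1) and (3.2) both reduce to the single inequality in $(\mathrm{iii}_b)$, so they hold for all $n\ge n_0$.

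It remains to check (2.5), namely $\lambda\mu^{k_0-1}>1$, where $k_0$ is the minimal positive integer with $a_{j_0 j_0}^{(k_0)}>0$ (this $k_0$ exists because $A$ is irreducible, as noted after Lemma 2.9). With $\mu=\lambda>1$ we have $\lambda\mu^{k_0-1}=\lambda^{k_0}$, and since $k_0\ge 1$ and $\lambda>1$ this is clearly $>1$. Thus all hypotheses of Theorem 3.1 are met, and the conclusion that system (1.1) is chaotic in the strong sense of Li-Yorke follows immediately.

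There is essentially no obstacle here; the only point requiring a word of care is making sure the existence of $k_0$ is legitimately invoked — it is, precisely because $A$ is irreducible, which is part of assumption (ii) of Lemma 2.8 and hence is in force. One could also remark that under $(\mathrm{iii}_b)$ the resulting $\delta$-scrambled set is the same set $S$ constructed in the proof of Theorem 3.1, with $\delta=\min\{d(V_i,V_j):1\le i\ne j\le N\}$, so no new construction is needed; the corollary is genuinely a specialization rather than a strengthening. I would therefore write the proof in two or three sentences: observe that $(\mathrm{iii}_b)$ yields $(\mathrm{iii}_a)$ upon taking $\lambda=\mu$ (the given constant) and $j_0$ arbitrary, note that then (2.5) becomes $\lambda^{k_0}>1$, which holds trivially, and conclude by invoking Theorem 3.1.
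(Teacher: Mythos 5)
Your proposal is correct and matches the paper's approach: the paper simply states that the corollary is ``a direct consequence of Theorem 3.1,'' and your verification (take $\mu=\lambda$, choose $j_0$ arbitrarily, note that $k_0$ exists by irreducibility and that $\lambda\mu^{k_0-1}=\lambda^{k_0}>1$ holds automatically) is exactly the specialization the authors have in mind.
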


\begin{remark} Both Theorem 3.1 and Corollary 1 relaxes the assumptions of Lemma 2.8. More exactly, the control of the upper bounds in (2.2) is removed. Similarly, Theorem 3.1 also relaxes assumption (iii) of Theorem 3.1 in [32], in which time-varying coupled-expansion for a special irreducible transition matrix was discussed. In addition, Theorem 3.1 weakens the control of the lower bounds in (2.2) and generalizes the result about Li-Yorke chaos
in Lemma 2.9 for autonomous systems to time-varying systems; Corollary 1 extends the result about Li-Yorke chaos in Theorem 5.5 in [27] for autonomous systems to time-varying systems.
\end{remark}

The following result can be directly derived from Theorem 3.1 in the case that $k_0=1$ in (2.6).

\begin{corollary} Let all the assumptions in Lemma 2.8 hold except that assumption $\rm{(iii)}$ is replaced by
\begin{itemize}
\item [${\rm (iii_{c})}$] there exist an integer $j_0,\;1\leq j_0\leq N$, and positive constants $\lambda>1$ and $\mu$ such that $a_{j_0j_0}=1$ and
for all $n\geq n_{0}$,
$$d(f_{n}(x),f_{n}(y))\geq \lambda d(x,y),\;\forall\;x,y\in V_{j_{0}},$$
$$d(f_{n}(x),f_{n}(y))\geq \mu d(x,y),\;\forall\;x,y\in V_{j},\;j\neq j _{0}.$$
\end{itemize}
Then system {\rm(1.1)} is chaotic in the strong sense of Li-Yorke.
\end{corollary}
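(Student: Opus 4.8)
The final statement (Corollary 3, with assumption $(\mathrm{iii}_c)$) is an immediate specialization of Theorem 3.1 to the case $k_0 = 1$, so the plan is to reduce it to Theorem 3.1 by checking that $(\mathrm{iii}_c)$ implies $(\mathrm{iii}_a)$ together with $(2.5)$ and $(2.6)$. First I would observe that the hypothesis $a_{j_0 j_0} = 1$ means that $a_{j_0 j_0}^{(1)} > 0$, so $k_0 = 1$ is the minimal positive integer for which $(2.6)$ holds; this is forced, and there is nothing to prove beyond noting it. With $k_0 = 1$, condition $(2.5)$ reads $\lambda \mu^{k_0 - 1} = \lambda \mu^{0} = \lambda > 1$, which is exactly the assumption $\lambda > 1$ already imposed in $(\mathrm{iii}_c)$. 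Thus $(2.5)$ and $(2.6)$ hold automatically, with no constraint on $\mu$ other than positivity.

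Next I would match the distance inequalities: $(3.1)$ and $(3.2)$ in assumption $(\mathrm{iii}_a)$ are literally the two displayed inequalities in $(\mathrm{iii}_c)$, namely $d(f_n(x), f_n(y)) \geq \lambda d(x,y)$ for $x, y \in V_{j_0}$ and $d(f_n(x), f_n(y)) \geq \mu d(x,y)$ for $x, y \in V_j$ with $j \neq j_0$, both required for all $n \geq n_0$. Hence every ingredient of assumption $(\mathrm{iii}_a)$ of Theorem 3.1 is present. Since all the remaining assumptions are inherited verbatim from Lemma 2.8 (the completeness of $X$, the nonempty infinite intersection of the $D_n$ from some stage on, the $N \geq 2$ bounded closed sets $V_j$ with $d(V_i, V_j) > 0$, the continuity of the $f_n$ on $E = \bigcup_j V_j$, the preimage-covering condition $(\mathrm{i})$, and the strict $A$-coupled-expansion $(\mathrm{ii})$ for an irreducible transition matrix $A$ with $\sum_j a_{i_0 j} \geq 2$), the full hypothesis list of Theorem 3.1 is satisfied.

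I would then simply invoke Theorem 3.1 to conclude that system $(1.1)$ is chaotic in the strong sense of Li-Yorke, which is the assertion of the corollary. There is, in truth, no real obstacle here: the only point that deserves an explicit sentence is the verification that $a_{j_0 j_0} = 1$ forces $k_0 = 1$ and therefore collapses $(2.5)$ to the already-assumed $\lambda > 1$, so that the auxiliary exponent $\mu^{k_0 - 1}$ disappears and no compatibility condition between $\lambda$ and $\mu$ survives. Everything else is a direct quotation of hypotheses. A complete proof would read, in essence: ``Taking $k_0 = 1$ (which is admissible since $a_{j_0 j_0}^{(1)} = a_{j_0 j_0} = 1 > 0$), conditions $(2.6)$ and $(2.5)$ hold because $\lambda \mu^{k_0-1} = \lambda > 1$; then $(\mathrm{iii}_c)$ is precisely $(\mathrm{iii}_a)$, and the conclusion follows from Theorem 3.1.''
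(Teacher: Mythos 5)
Your proposal is correct and matches the paper's own treatment: the paper derives this corollary directly from Theorem 3.1 by noting that $a_{j_0j_0}=1$ forces $k_0=1$ in (2.6), so that (2.5) collapses to the assumed $\lambda>1$ and $(\mathrm{iii}_c)$ becomes a special case of $(\mathrm{iii}_a)$. Nothing further is needed.
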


\begin{remark} Corollary 2 extends the result on Li-Yorke chaos in Corollary 3.1 in [33] for autonomous systems to time-varying systems.\end{remark}

In [28], the second author of the present paper was motivated by the fact that the concepts of chaos are characterized by properties of orbits at some times and then introduced a new type of induced system of a time-varying discrete system as follows. Let $\{k_{n}\}_{n=1}^{\infty}$ be an increasing sequence of positive integers with $k_{n}\rightarrow\infty$ as $n\rightarrow\infty$. The following system:
$$\hat{x}_{n+1}=\hat{f_{n}}(\hat{x}_{n}),\;n\geq0,\ \ \ \ \eqno(3.14)$$
is called the induced system by system (1.1) through (or with respect to) $\{k_{n}\}_{n=1}^{\infty}$, where
$$\hat{f_{0}}:=f_{k_{1}-1}\circ f_{k_{1}-2}\circ\cdots\circ f_{0},\;\hat{f_{n}}:=f_{k_{n+1}-1}\circ f_{k_{n+1}-2}\circ\cdots\circ f_{k_{n}},\;n\geq1,$$
while $\hat{f}_{n}:\hat{D}_{n}\rightarrow\hat{D}_{n+1}$ for $n\geq0$; and $\hat{D_{0}}:=D_{0}, \hat{D}_{n}:=D_{k_{n}}$ for $n\geq1$.

Let $\{x_{n}\}_{n=0}^{\infty}$ be the orbit of system (1.1) starting from $x_{0}$ and  $\{\hat{x}_{n}\}_{n=0}^{\infty}$ be the orbit of the
induced system (3.14) starting from $\hat{x}_{0}:=x_{0}$. Then $\hat{x}_{n}=x_{k_n}, n\geq1$. So the orbit $\{\hat{x}_{n}\}_{n=0}^{\infty}$ of the induced system (3.14) is a part of the orbit $\{x_{n}\}_{n=0}^{\infty}$ of system (1.1) starting from the same initial point $x_0$. It is also said to be a suborbit of the orbit $\{x_{n}\}_{n=0}^{\infty}$ of system (1.1).

\begin{lemma} ({\rm[28, Theorem 3.1]}) For the induced system (3.14), one has that
\begin{itemize}
\item [{\rm (1)}] If system {\rm(3.14)} is chaotic
in the sense of Li-Yorke through some $\{k_{n}\}_{n=1}^{\infty}$, so is system {\rm(1.1)}.
\item [{\rm (2)}] Assume that $D_n$, $n\geq0$; are in the same metric space $(X; d)$ and $\{D_n\}_{n=m}^{\infty}$ is uniformly bounded for some integer
$m\geq0$. If system {\rm(3.14)} is chaotic in the strong sense of Li-Yorke through some $\{k_{n}\}_{n=1}^{\infty}$, so is system {\rm(1.1)}.
\end{itemize}
\end{lemma}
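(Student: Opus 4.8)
The plan is to show that a (strong) scrambled set of the induced system (3.14) is automatically a (strong) scrambled set of system (1.1), exploiting the identity $\hat{x}_{n}=x_{k_{n}}$ already recorded above. Throughout, for an initial point $x_{0}\in \hat{D}_{0}=D_{0}$ we write $\{x_{n}\}_{n=0}^{\infty}$ for its orbit under (1.1) and $\{\hat{x}_{n}\}_{n=0}^{\infty}$ for its orbit under (3.14), so that $x_{n}\in D_{n}$ for all $n\geq0$ and $\hat{x}_{n}=x_{k_{n}}$ for $n\geq1$. The basic analytic fact to be used repeatedly is that, since $\{k_{n}\}_{n=1}^{\infty}$ is a strictly increasing sequence of positive integers, $\{c_{k_{n}}\}$ is a subsequence of any sequence $\{c_{n}\}$, whence $\liminf_{n\to\infty}c_{n}\leq\liminf_{n\to\infty}c_{k_{n}}$ and $\limsup_{n\to\infty}c_{n}\geq\limsup_{n\to\infty}c_{k_{n}}$.

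For assertion (1), suppose $\hat{S}\subset\hat{D}_{0}$ is an uncountable scrambled set of (3.14); I claim $S:=\hat{S}$, viewed as a subset of $D_{0}$, is an uncountable scrambled set of (1.1). Fix distinct $x_{0},y_{0}\in S$; these are distinct initial points of (1.1) with orbits $\{x_{n}\}$, $\{y_{n}\}$. Taking $c_{n}=d(x_{n},y_{n})$ and using $d(x_{k_{n}},y_{k_{n}})=d(\hat{x}_{n},\hat{y}_{n})$ together with Definition 2.1(i)--(ii) for system (3.14) gives $\liminf_{n\to\infty}d(x_{n},y_{n})\leq\liminf_{n\to\infty}d(\hat{x}_{n},\hat{y}_{n})=0$, hence $=0$ since $d\geq0$, and $\limsup_{n\to\infty}d(x_{n},y_{n})\geq\limsup_{n\to\infty}d(\hat{x}_{n},\hat{y}_{n})>0$. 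Thus $S$ satisfies Definition 2.1(i)--(ii) for (1.1) and is uncountable, so (1.1) is chaotic in the sense of Li-Yorke.

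For assertion (2), let $\hat{S}$ be an uncountable $\delta$-scrambled set of (3.14) all of whose orbits under (3.14) are bounded. The same subsequence estimates, now invoking Definition 2.1(iii) for (3.14), show that $S:=\hat{S}$ satisfies (i) and (iii) for system (1.1) with the same constant $\delta$, and $S$ is uncountable. It remains to verify that every orbit $\{x_{n}\}_{n=0}^{\infty}$ of (1.1) starting from a point of $S$ is bounded. Here I use the hypotheses that all $D_{n}$ lie in the common space $(X,d)$ and that $\{D_{n}\}_{n=m}^{\infty}$ is uniformly bounded: pick a bounded set $U\subset X$ with $D_{n}\subset U$ for all $n\geq m$. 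Since $x_{n}\in D_{n}$ for every $n$, we get $x_{n}\in U$ for all $n\geq m$, while $\{x_{0},\dots,x_{m-1}\}$ is a finite set; hence $\{x_{n}\}_{n=0}^{\infty}$ is bounded. Therefore $S$ is an uncountable $\delta$-scrambled set of (1.1) with all orbits bounded, i.e., (1.1) is chaotic in the strong sense of Li-Yorke.

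The argument is essentially bookkeeping once the relation $\hat{x}_{n}=x_{k_{n}}$ is available. The only step requiring more than a monotone-subsequence inequality is the boundedness claim in (2): the suborbit $\{x_{k_{n}}\}$ being bounded says nothing directly about the intermediate iterates, so the uniform boundedness of the domains $D_{n}$ is genuinely needed there, and this is the one point I expect to deserve care.
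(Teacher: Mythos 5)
Your proof is correct: the paper itself gives no proof of this lemma (it is quoted verbatim from [28, Theorem 3.1]), and your argument is exactly the intended one, resting on the identity $\hat{x}_{n}=x_{k_{n}}$ so that the subsequence inequalities for $\liminf$ and $\limsup$ transfer conditions (i)--(iii) of Definition 2.1 from the induced system to system (1.1). Your closing remark is also on target: the uniform boundedness of $\{D_{n}\}_{n\geq m}$ is precisely what is needed to control the intermediate iterates $x_{n}$ for $k_{j}<n<k_{j+1}$, which the bounded suborbit alone cannot supply.
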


We get the following result applying Lemma 3.2 to Theorem 3.1:

\begin{theorem} Assume that there exists an
increasing subsequence $\{k_{n}\}_{n=1}^{\infty}$ of positive integers such that all the assumptions in Theorem 3.1 hold for system (3.14).
Then system {\rm(1.1)} is chaotic in the sense of Li-Yorke. Further,
if $\{D_n\}_{n=m}^{\infty}$ or $\{(f_{k_{n}+i}\circ f_{k_{n}+i-1}\circ\cdots\circ f_{k_{n}})(E):0\leq i\leq k_{n+1}-k_{n}-1,n\geq m\}$ is uniformly bounded for some integer $m\geq0$, where $E$ is specified by Lemma 2.8, then system {\rm(1.1)} is chaotic in the strong sense of Li-Yorke.
\end{theorem}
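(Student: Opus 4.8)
The plan is to deduce the statement from Theorem 3.1 and Lemma 3.2, taking the induced system (3.14) as the object to which Theorem 3.1 is applied. First I would observe that, by hypothesis, all assumptions of Theorem 3.1 hold for (3.14), so Theorem 3.1 applies to (3.14) in place of (1.1) and yields that (3.14) is chaotic in the strong sense of Li-Yorke through $\{k_n\}_{n=1}^{\infty}$; in particular (3.14) is chaotic in the ordinary sense of Li-Yorke through $\{k_n\}$. By Lemma 3.2(1) this gives that (1.1) is chaotic in the sense of Li-Yorke, which is the first assertion. For the second assertion, if $\{D_n\}_{n=m}^{\infty}$ is uniformly bounded for some $m\ge 0$, then the hypotheses of Lemma 3.2(2) are in force, and since (3.14) is already chaotic in the strong sense of Li-Yorke, Lemma 3.2(2) immediately delivers that (1.1) is chaotic in the strong sense of Li-Yorke.

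It then remains to handle the other alternative, that the family $\mathcal{F}:=\{(f_{k_n+i}\circ\cdots\circ f_{k_n})(E):0\le i\le k_{n+1}-k_n-1,\ n\ge m\}$ is uniformly bounded, for which Lemma 3.2(2) is not available; here I would rerun only the boundedness part of the argument. From the proof of Theorem 3.1 applied to (3.14) one obtains an uncountable $\delta$-scrambled set $\hat{S}\subset\hat{D}_0=D_0$ of (3.14) together with an integer $m_0\ge 0$ such that, for every initial point in $\hat{S}$, the orbit $\{\hat{x}_n\}$ of (3.14) is bounded and lies in $E=\bigcup_{j=1}^{N}V_j$ for all $n\ge m_0$; since replacing $m$ by $\max\{m,m_0\}$ only shrinks $\mathcal{F}$, I may assume $m\ge m_0$. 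As in the proof of Lemma 3.2(1), the same set $\hat{S}$ is a $\delta$-scrambled set of (1.1), because for $x_0\in\hat{S}$ the orbit $\{x_n\}$ of (1.1) satisfies $x_{k_n}=\hat{x}_n$ for $n\ge1$, so its sub-orbit at the sampling times is a scrambled orbit of (3.14). Now fix $x_0\in\hat{S}$ and its orbit $\{x_n\}$ under (1.1). For $n\ge m$ one has $x_{k_n}=\hat{x}_n\in E$, and for $k_n<j\le k_{n+1}$ one has $x_j=(f_{k_n+i}\circ\cdots\circ f_{k_n})(x_{k_n})$ with $i=j-k_n-1\in\{0,\dots,k_{n+1}-k_n-1\}$, so $x_j$ belongs to a member of $\mathcal{F}$. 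Hence the tail $\{x_n:n\ge k_m\}$ is contained in the bounded set $E\cup\bigcup\mathcal{F}$; since the initial block $\{x_0,\dots,x_{k_m-1}\}$ is finite, the whole orbit is bounded. Therefore $\hat{S}$ is an uncountable $\delta$-scrambled set of (1.1) all of whose orbits are bounded, so (1.1) is chaotic in the strong sense of Li-Yorke.

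The work in this argument is essentially bookkeeping, and the two places where care is needed are: extracting from the proof of Theorem 3.1 (not merely its statement) the extra fact that the scrambled set it produces has all orbits eventually confined to $E$, and getting the index matching right so that the iterates of (1.1) strictly between two consecutive sampling times $k_n$ and $k_{n+1}$ are exactly the points $(f_{k_n+i}\circ\cdots\circ f_{k_n})(x_{k_n})$, $0\le i\le k_{n+1}-k_n-1$, appearing in the hypothesis. These two points are where I would expect to spend the (modest) effort; once they are settled, no uniformity over $\hat{S}$ is needed, since each orbit consists of a finite initial block together with a tail confined to one fixed bounded set.
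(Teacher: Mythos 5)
Your proposal is correct and follows essentially the same route as the paper: apply Theorem 3.1 to the induced system (3.14), invoke Lemma 3.2(1) for ordinary Li--Yorke chaos and Lemma 3.2(2) in the first boundedness case, and in the second case extract from the proof of Theorem 3.1 that the scrambled set's orbits under (3.14) eventually lie in $E$, so the intermediate iterates of (1.1) fall into the uniformly bounded family $\{(f_{k_n+i}\circ\cdots\circ f_{k_n})(E)\}$. Your extra care with the index matching and the finite initial block only makes explicit what the paper leaves implicit.
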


\begin{proof} By Theorem 3.1, the induced system (3.14) is chaotic in the strong sense of Li-Yorke. Thus, system (1.1) is chaotic in the sense of Li-Yorke by (i) of Lemma 3.2.

In the first case that $\{D_{n}\}_{n=m}^{\infty}$ is uniformly bounded for some integer $m\geq0$, it is evident that system (1.1) is chaotic
in the strong sense of Li-Yorke by (ii) of Lemma 3.2.

In the second case that $\{(f_{k_{n}+i}\circ f_{k_{n}+i-1}\circ\cdots\circ f_{k_{n}})(E) : 0 \leq i \leq k_{n+1}-k_{n}-1; n \geq m\}$ is uniformly
bounded for some integer $m\geq0$, let $S$ be the corresponding scrambled set of system (3.14) $(n\geq n_0)$. It
follows from the proof of Theorem 3.1 that
$$(\hat{f}_{n}\circ\hat{f}_{n-1}\circ\cdots\circ\hat{f}_{n_0})(S)\subset E;\;\;n\geq n_0,$$
which implies that for each initial point $\hat{x}_{n_0}\in S$, every point $\hat{x}_{n}$ in the corresponding orbit of system (3.14)
$(n\geq n_0)$ lies in $E$. So the orbit $\{\hat{x}_{n}\}$ is bounded since $V_j$,\;$1\leq j\leq N$, are bounded. Hence, the orbit of system (1.1) $(n\geq k_{n_0})$
starting from $x_{k_{n_0}}=\hat{x}_{n_0}$ is bounded by the assumption. Therefore, system (1.1) is chaotic in
the strong sense of Li-Yorke. This completes the proof.
\end{proof}

\begin{remark} Similarly, Theorem 3.3 relaxes assumption (iii) of Theorem 4.1 in [28]. By taking $k_n=n$ for $n\geq1$,
Theorem 3.3 is the same as Theorem 3.1.
\end{remark}

\section{Li-Yorke chaos induced by strict $A$-coupled-expansion in compact subsets in metric space}

In this section, we shall establish several criteria of chaos in the strong sense of Li-Yorke, induced
by strict coupled-expansion for transition matrix in compact subsets in metric spaces, for system (1.1).

\begin{theorem} Let all the assumptions in Theorem 3.1 hold except that $(X,d)$ is a metric space and $V_{j},\;1\leq j\leq N$,
are disjoint nonempty compact subsets. Then system {\rm(1.1)} is chaotic in the strong sense of Li-Yorke.
\end{theorem}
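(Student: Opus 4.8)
The plan is to retrace, essentially verbatim, the three-step argument in the proof of Theorem 3.1, since none of those steps uses completeness of $X$ \emph{except} at the single point where Lemma 2.4 is invoked to conclude that $\bigcap_{n\geq0}V_{\hat\beta}^{n,n_0}$ is a singleton. The only substantive change will be to replace that appeal to Lemma 2.4 by a compactness argument. By Lemma 2.3 it again suffices to show that system (2.1) is chaotic in the strong sense of Li--Yorke.

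First I would run Step 1 and the construction of Step 2 unchanged: using (2.6) and Lemma 2.5 choose the eventually periodic itinerary $\alpha$ as in (3.4), verify via assumption (iii$_a$) (i.e.\ (3.1), (3.2), (2.5), (2.6)) that $d(V_\alpha^{l_i,m+n_0})\le(\lambda\mu^{k_0-1})^{-i}d(V_{j_0})\to0$ uniformly in $m\geq0$, then form $\Omega$, the itineraries $\hat\beta$ via (3.8), and establish (3.9)--(3.11) exactly as before. Here one should note that for disjoint nonempty compact sets $d(V_i,V_j)>0$ automatically, so every distance estimate and the definition $\delta:=\min\{d(V_i,V_j):1\le i\neq j\le N\}>0$ go through without change.

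The key step, and the only real obstacle, is the passage from (3.11) to the conclusion that $\bigcap_{n\geq0}V_{\hat\beta}^{n,n_0}$ is a single point. The argument I would give is this: for each $n\geq0$, $V_{\hat\beta}^{n,n_0}=\bigcap_{k=0}^{n}f_{n_0}^{-k}(V_{a_k})$ is a closed subset of $V_{a_0}$ by continuity of the $f_n$'s, and nonempty for every $n$ by the coupled-expansion relation $f_n(V_i)\supset\bigcup_{a_{ij}=1}V_j$ in assumption (ii), exactly as asserted in the proof of Theorem 3.1; since $V_{a_0}$ is now compact, each $V_{\hat\beta}^{n,n_0}$ is compact. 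Thus $\{V_{\hat\beta}^{n,n_0}\}_{n\geq0}$ is a nested sequence of nonempty compact sets, so its intersection is nonempty by the finite intersection property, and by (3.11) the diameter of the intersection is $0$; hence $\bigcap_{n\geq0}V_{\hat\beta}^{n,n_0}=\{x_{\hat\beta}\}$ is a singleton. This is precisely the place where compactness of the $V_j$ substitutes for the completeness hypothesis underlying Lemma 2.4; everything around it is identical.

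Finally I would carry out Step 3 verbatim. Distinct $\beta\in\Omega$ yield distinct $\hat\beta$ and hence distinct $x_{\hat\beta}$, so $S:=\{x_{\hat\beta}:\beta\in\Omega\}$ is uncountable; two distinct symbols separating $\hat\beta_1$ and $\hat\beta_2$ infinitely often give $\limsup_{n}d(f_{n_0}^{n}(x_{\hat\beta_1}),f_{n_0}^{n}(x_{\hat\beta_2}))\ge\delta$ as in (3.12), while (3.9) forces $f_{n_0}^{h_{j-1}}(x_{\hat\beta_1}),f_{n_0}^{h_{j-1}}(x_{\hat\beta_2})\in V_\alpha^{l_{i_j},h_{j-1}+n_0}$, so the uniform convergence in Step 1 gives $\liminf_{n}d(f_{n_0}^{n}(x_{\hat\beta_1}),f_{n_0}^{n}(x_{\hat\beta_2}))=0$ as in (3.13). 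Boundedness of all orbits starting in $S$ is immediate since they stay in $\bigcup_{i=1}^{N}V_i$, which is compact and hence bounded. Therefore system (2.1) is chaotic in the strong sense of Li--Yorke, and by assumption (i) together with Lemma 2.3 so is system (1.1). I expect no difficulty beyond the compactness substitution described in the previous paragraph.
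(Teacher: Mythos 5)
Your proposal is correct and follows exactly the route the paper intends: the paper's own proof of this theorem is simply "completely similar to that of Theorem 3.1, details omitted," and your substitution of Lemma 2.4 by the finite-intersection property of nested nonempty compact sets (together with the diameter estimate (3.11) to get a singleton) is precisely the one modification needed. Your observation that $d(V_i,V_j)>0$ is automatic for disjoint nonempty compact sets, so that $\delta>0$ and all separation estimates survive, correctly disposes of the only other place where the changed hypotheses could matter.
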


\begin{proof} Since the proof is completely similar to that of Theorem 3.1, its details are omitted.
\end{proof}

If $\min\{\lambda,\mu\}>1$ in assumption (iii) of Theorem 4.1, one has the following result, which is a direct consequence of Theorem 4.1.

\begin{corollary} Let all the assumptions in Corollary 1 hold except that $(X,d)$ is a metric space and $V_{j},\;1\leq j\leq N$,
are disjoint nonempty compact subsets. Then system {\rm(1.1)} is chaotic in the strong sense of Li-Yorke.
\end{corollary}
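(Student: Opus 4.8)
The plan is to deduce this corollary directly from Theorem 4.1, exactly as Corollary 1 is deduced from Theorem 3.1; the only point to verify is that assumption (iii$_{b}$) is a special case of the hypothesis (iii$_{a}$) that enters Theorem 4.1 through Theorem 3.1. Theorem 4.1 carries all the assumptions of Theorem 3.1 with $(X,d)$ only a metric space and the $V_{j}$ disjoint nonempty compact sets, and the present statement carries all the assumptions of Corollary 1 — hence all of Lemma 2.8 — under these same two weakenings; so it suffices to produce, from (iii$_{b}$), an index $j_{0}$ and constants $\lambda,\mu$ for which (iii$_{a}$) holds.

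First I would fix $j_{0}$ to be any index in $\{1,\dots,N\}$ (for concreteness, $j_{0}=i_{0}$, the index from assumption (ii)) and, since $A$ is an irreducible transition matrix, let $k_{0}\ge 1$ be the minimal positive integer with $a_{j_{0}j_{0}}^{(k_{0})}>0$; this is precisely (2.6), and such $k_{0}$ exists as remarked after Lemma 2.9. Next I would set $\mu:=\lambda$, where $\lambda>1$ is the constant furnished by (iii$_{b}$). Then (3.1) is (iii$_{b}$) restricted to $V_{j_{0}}$, (3.2) is (iii$_{b}$) restricted to the remaining $V_{j}$, and both hold for all $n\ge n_{0}$; moreover (2.5) becomes $\lambda\mu^{k_{0}-1}=\lambda^{k_{0}}>1$, which is immediate from $\lambda>1$ and $k_{0}\ge 1$. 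Hence (iii$_{a}$) is satisfied with these data.

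It remains only to note that compactness of each $V_{j}$ in the metric space $(X,d)$ makes the $V_{j}$ bounded and closed, and disjointness of compact sets makes $d(V_{i},V_{j})>0$ for $i\ne j$; together with the nonemptiness of $\bigcap_{n=n_{0}}^{\infty}D_{n}$, the inclusion $\bigcup_{j=1}^{N}V_{j}\subset\bigcap_{n=n_{0}}^{\infty}D_{n}$, the continuity of the $f_{n}$ on $E=\bigcup_{j=1}^{N}V_{j}$, condition (i), and the strict $A$-coupled-expansion (ii) — all inherited unchanged from the hypotheses of Corollary 1 — every assumption of Theorem 4.1 is in force, and Theorem 4.1 yields that system (1.1) is chaotic in the strong sense of Li-Yorke.

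I expect no genuine obstacle here: the statement is a plain specialization. The one point worth attention is that the \emph{strict} inequality $\lambda>1$ in (iii$_{b}$), together with $k_{0}\ge 1$, is exactly what forces (2.5) after the choice $\mu=\lambda$. If a self-contained argument were preferred to this reduction, one could instead repeat Steps 1--3 of the proof of Theorem 3.1 verbatim with $\mu_{0}=\lambda$, replacing the use of Lemma 2.4 by its compact-space counterpart (a nested sequence of nonempty compact sets with diameters tending to $0$ has singleton intersection, needing no completeness); but this merely reproduces the reduction above.
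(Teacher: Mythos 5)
Your proposal is correct and follows exactly the paper's route: the paper also obtains this corollary as a direct consequence of Theorem 4.1, by observing that assumption (iii$_{b}$) with a uniform $\lambda>1$ yields (iii$_{a}$) upon taking $\mu=\lambda$ (so that $\lambda\mu^{k_{0}-1}=\lambda^{k_{0}}>1$ automatically). Your additional remarks on compactness implying boundedness, closedness, and positive pairwise distances are accurate and consistent with the paper's setup.
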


\begin{remark} Corollary 3 extends the result on Li-Yorke chaos of Theorem 5.2 in [27] for autonomous systems to time-varying systems.\end{remark}

If $k_{0}=1$ in assumption (iii) of Theorem 4.1,  we get the following better result that can strongly relax assumption (iii) in Theorem 4.1
because it only requires that $f_n$ are expanding in distance in one subset for all $n\geq n_{0}$.

\begin{theorem} Let all the assumptions in Theorem 4.1 hold except that assumption $\rm{(iii)}$ is replaced by
\begin{itemize}
\item [{\rm (iii$_{d}$)}] there exist an integer $j_0,\;1\leq j_0\leq N$, and positive constants $\lambda>1$ such that $a_{j_0j_0}=1$ and
for all $n\geq n_{0}$,
$$d(f_{n}(x),f_{n}(y))\geq \lambda d(x,y),\;\forall\;x,y\in V_{j_{0}}.$$
\end{itemize}
Then system {\rm(1.1)} is chaotic in the strong sense of Li-Yorke.
\end{theorem}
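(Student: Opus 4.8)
The statement of Theorem 4.2 concerns the special case $k_0=1$, i.e. the situation where the symbol $j_0$ admits a self-loop ($a_{j_0j_0}=1$). In this case the chain of composed maps that we use to control distances is built entirely out of copies of $f_n$ restricted to the single set $V_{j_0}$, so only the expansion estimate \eqref{...} on $V_{j_0}$ is needed and the auxiliary constant $\mu$ on the other $V_j$ becomes irrelevant for the contraction half of the argument.

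\begin{proof}[Proof of Theorem 4.2]
The plan is to verify that this theorem is nothing but the case $k_0=1$ of Theorem 4.1, and then to invoke that theorem. First I would observe that since $A$ is an irreducible transition matrix and $a_{j_0j_0}=1$, the minimal positive integer $k_0$ with $a_{j_0j_0}^{(k_0)}>0$ is exactly $k_0=1$; hence condition \eqref{...} of assumption (iii) of Theorem 4.1, namely $\lambda\mu^{k_0-1}>1$, reduces to $\lambda\mu^{0}=\lambda>1$, which holds by hypothesis (iii$_d$) with no constraint on $\mu$ whatsoever. Thus one may take $\mu$ in Theorem 4.1 to be any positive constant, for instance $\mu:=1$: inequality \eqref{...} for $j\neq j_0$, $d(f_n(x),f_n(y))\ge \mu d(x,y)=d(x,y)$, is then automatically satisfied simply because $f_n$ need not even be expanding there — wait, this is \emph{not} automatic. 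Let me reconsider: one instead notes that for the case $k_0=1$ the word $\alpha$ constructed in Step 1 of the proof of Theorem 3.1 is the constant sequence $(j_0,j_0,j_0,\dots)$, so every distance estimate in Steps 1--3 only ever uses points lying in $V_{j_0}$ and only ever uses the expansion factor $\lambda$; the constant $\mu$ never enters. Consequently the entire proof of Theorem 4.1 (which itself mirrors Theorem 3.1 with compact $V_j$) goes through verbatim after deleting every occurrence of $\mu$ and replacing $\mu_0=\min\{\lambda,\mu\}$ by $\lambda$.

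Concretely, I would run the three steps exactly as in the proof of Theorem 3.1, making the following simplifications. In Step 1, take $\alpha=(j_0,j_0,\dots)$ and $l_i=i$ (so $k_0=1$); the estimate $d(V_\alpha^{l_i,m+n_0})\le \lambda^{-i}d(V_{j_0})$ is immediate from (iii$_d$) since all intermediate iterates stay in $V_{j_0}$, and $\lambda>1$ gives the required uniform convergence to $0$. In Step 2, the allowable words $w_0,w_1,w_2$ and the set $\Omega$ are produced by Lemma 2.6 exactly as before (this is where irreducibility and $\sum_j a_{i_0j}\ge 2$ are used, and where compactness of the $V_j$ lets Lemma 2.4 — which only needs bounded closed sets, automatic for compact sets — deliver the singleton $\bigcap_n V_{\hat\beta}^{n,n_0}$); the scrambled set $S=\{x_{\hat\beta}:\beta\in\Omega\}$ is uncountable by the same coding injectivity. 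In Step 3, $\delta:=\min\{d(V_i,V_j):i\ne j\}>0$ by strict $A$-coupled-expansion and compactness, and the $\limsup\ge\delta$ / $\liminf=0$ dichotomy follows precisely as before. Finally Lemma 2.3 lifts chaos of system (2.1) to chaos of system (1.1).

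The only point requiring a word of care — and the one I would flag as the main (minor) obstacle — is the reduction itself: one must check that hypothesis (iii$_d$), which says nothing about the behaviour of $f_n$ on $V_j$ for $j\ne j_0$, genuinely suffices, i.e. that the proof of Theorem 4.1 never secretly uses the lower bound $\mu$ on those sets. Inspecting Steps 1--3 above confirms this: the constant $\mu$ in the proof of Theorem 3.1 is used only through $\mu_0=\min\{\lambda,\mu\}$ in \eqref{...} and in the chain $d(f^{l_i}_{m+n_0}(x),f^{l_i}_{m+n_0}(y))\ge \lambda\mu^{k_0-1}d(\cdot,\cdot)$, and when $k_0=1$ both collapse to pure $\lambda$-expansion on $V_{j_0}$. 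Hence Theorem 4.2 follows as the $k_0=1$ specialization of Theorem 4.1, and equivalently it is the compact-subset analogue of Corollary 2. This completes the proof.
\end{proof}
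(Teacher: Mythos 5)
Your reduction breaks down at the point where you claim that ``every distance estimate in Steps 1--3 only ever uses points lying in $V_{j_0}$'' and that compactness plus Lemma 2.4 delivers the singleton $\bigcap_n V_{\hat\beta}^{n,n_0}$. In the proof of Theorem 3.1 the smallness of $d(V_{\hat\beta}^{h_j-1,n_0})$ is obtained by pulling a small diameter at time $h_{j-1}$ back to time $0$ via the estimate $d(x,y)\leq\mu_0^{-h_{j-1}}d(f_{n_0}^{h_{j-1}}(x),f_{n_0}^{h_{j-1}}(y))$ with $\mu_0=\min\{\lambda,\mu\}$, and this pull-back runs through the orbit segments coded by $w_0,B_1,B_2,\dots$, which visit sets $V_j$ with $j\neq j_0$. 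Under (iii$_d$) there is \emph{no} lower bound of the form $d(f_n(x),f_n(y))\geq\mu\,d(x,y)$ on those sets --- the maps may contract arbitrarily or even fail to be injective there --- so you cannot conclude $d(V_{\hat\beta}^{n,n_0})\to0$, and Lemma 2.4 (whose hypothesis is precisely that the diameters tend to $0$; boundedness and closedness alone do not suffice) is not applicable. This is exactly why (iii$_d$) is genuinely weaker than the $k_0=1$ case of (iii$_a$)/Corollary 2, and why the theorem is stated for compact $V_j$ with a separate proof rather than as a specialization of Theorem 4.1. Your closing identification of the result with ``the compact-subset analogue of Corollary 2'' misses this: Corollary 2 still demands a uniform positive lower bound $\mu$ on every $V_j$, $j\neq j_0$, for all $n$, which is a real restriction that (iii$_d$) drops.

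The repair is what the paper actually does, and your argument is salvageable along those lines: use compactness only to conclude that the nested nonempty compact sets $V_{\hat\beta}^{n,n_0}$ have nonempty intersection, pick an \emph{arbitrary} point $x_{\hat\beta}$ in it (the intersection need not be a singleton), and build $\hat\beta$ by inserting blocks of $j_0$'s of growing length $n$ between the words $w_0,B_1,\dots$. Distinctness of the $x_{\hat\beta}$ for distinct $\beta$ still follows from disjointness of the $V_j$, the $\limsup\geq\delta$ part is unchanged, and the $\liminf=0$ part follows because at the start of the $n$-th $j_0$-block both iterates lie in $V_\alpha^{n-1,k_n+n_0}$, whose diameter is at most $\lambda^{-(n-1)}\tilde d$ by (iii$_d$) alone. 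As written, though, your proof contains a step (the singleton claim and the assertion that $\mu$ never enters Step 2) that is false under the stated hypotheses.
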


Although the idea in the proof of Theorem 4.2 is similar to that of Theorem 3.1, we give its detailed proof for completeness.

\begin{proof} By Lemma 2.3, it suffices to show that system (2.1)
is chaotic in the strong sense of Li-Yorke. For convenience, the rest of the proof is divided into three steps.

{\bf Step 1.} We claim that there exists $\alpha=(a_{0},a_{1},a_{2},\cdots)\in \sum_{N}^{+}(A)$ such that $d(V_{\alpha}^{n,m+n_0})$ uniformly converges to 0 with respect to $m\geq0$ as $n\rightarrow\infty$, where $V_{\alpha}^{n,m+n_0}$ is defined by (3.3).
Set
$$\alpha=(a_{0},a_{1},a_{2},\cdots):=(j_{0},j_{0},j_{0},j_{0},\cdots).$$
Then $\alpha\in \sum_{N}^{+}(A)$ since $a_{j_{0}j_{0}}=1$.
By assumption (iii), one easily obtain that
$$d(f_{m+n_0}^{n}(x),f_{m+n_0}^{n}(y))\geq \lambda^{n} d(x,y), \;\;\forall x, y \in V_{\alpha}^{n,m+n_0}, \;\;n\geq0.$$
Thus
$$d(x,y)\leq\lambda^{-n}d(f_{m+n_0}^{n}(x),f_{m+n_0}^{n}(y))\leq \lambda^{-n}d(V_{j_{0}})\leq \lambda^{-n}\tilde{d},$$
where $\tilde{d}=\max\limits_{1\leq j\leq N}\{d(V_{j})\}$. Then
$$d(V_{\alpha}^{n,m+n_0})\leq \lambda^{-n}\tilde{d},\;\;n\geq0.$$
Hence,
$d(V_{\alpha}^{n,m+n_0})$ uniformly converges to 0 with respect to $m\geq0$ as $n\rightarrow\infty$.

{\bf Step 2.} Construct the scrambled set $S$.

Since $A$ is a transition matrix, there exist $1\leq l_{0},m_{0}\leq N$ such that
$a_{l_{0}j_{0}}=a_{j_{0}m_{0}}=1$. Moreover, $A$ is irreducible and satisfies that $\sum\limits_{j=1}^{N}a_{i_{0}j}\geq2$ for some $i_{0}$. So, by Lemma 2.6 there exist three allowable words for matrix $A$:
$$w_{0}=(c_{1},c_{2},\cdots,c_{l_{1}}), w_{1}=(d_{1},d_{2},\cdots,d_{l_{2}}), w_{2}=(e_{1},e_{2},\cdots,e_{l_{2}}),\ \ \ \ \ \eqno(4.1)$$
where
$$w_{1}\neq w_{2}, c_{1}=m_{0}, d_{1}=e_{1}=j_{0}, c_{l_{1}}=d_{l_{2}}=e_{l_{2}}=l_{0}.\ \ \ \ \ \eqno(4.2)$$
Set
$$\Omega:=\{\beta=(w_{0},B_{1},B_{2},\cdots):B_{i}= w_{1}\;{\rm or} \; w_{2}, \;\;i\geq 1\}.$$
Evidently, $\Omega$ is an uncountable subset of $\sum_{N}^{+}(A)$.
For any $\beta=(w_{0},B_{1},B_{2},\cdots)\in \Omega$, set
$$\hat{\beta}=(b_{0},b_{1},\cdots):=(j_{0},w_{0},j_{0},j_{0},w_{0},B_{1},j_{0},j_{0},j_{0},w_{0},B_{1},B_{2},j_{0},\cdots).\ \ \ \ \ \eqno(4.3) $$
Then it follows from (4.1) and (4.2) that $\hat{\beta}\in \sum_{N}^{+}(A)$.
Denote
$$V_{\hat{\beta}}:=\bigcap\limits_{n\geq 0}V_{\hat{\beta}}^{n,n_0}.$$
By assumption (ii) and the continuity of $f_n$, it can be verified that $V_{\hat{\beta}}^{n,n_0}$ are nonempty compact subsets of $V_{b_{0}}$
and satisfy $V_{\hat{\beta}}^{n,n_0}\supset V_{\hat{\beta}}^{n+1,n_0}$ for all $n\geq n_0$.
Hence, $V_{\hat{\beta}}\neq\emptyset$. For any $\beta\in \Omega$, fix one point $x_{\hat{\beta}}\in V_{\hat{\beta}}$. Set
$$S:=\{x_{\hat{\beta}}: \beta\in \Omega\}.$$
It is evident that $\hat{\beta_{1}}\neq \hat{\beta_{2}}$ if and only if $\beta_{1}\neq \beta_{2}$.
So $x_{\hat{\beta_{1}}}\neq x_{\hat{\beta_{2}}}$ for any $\beta_{1},\beta_{2}\in\Omega$ with $\beta_{1}\neq \beta_{2}$. This shows that $S$ is an uncountable set.

{\bf Step 3.} Show that $S$ is a $\delta$-scrambled set of system (2.1), where $\delta$:= min$\{d(V_{i},V_{j}),1\leq i \neq j \leq N\}>0$.

For any given $x_{\hat{\beta_{1}}},x_{\hat{\beta_{2}}} \in S$ with $x_{\hat{\beta_{1}}}\neq x_{\hat{\beta_{2}}}$, the corresponding $\beta_{1}$ and $\beta_{2}$ in $\Omega$ are not equal by (4.3). Then it follows from (4.3) that there exist two different symbols $s_{1},s_{2}\in S_{0}=\{1,\cdots,N\}$ and an infinite increasing sequence
$\{m_{j}\}_{j=1}^{\infty}$ such that $f_{n_0}^{m_{j}}(x_{\hat{\beta_{1}}})\in V_{s_{1}}$ and $f_{n_0}^{m_{j}}(x_{\hat{\beta_{2}}})\in V_{s_{2}}$,
which implies that
$$d(f_{n_0}^{m_{j}}(x_{\hat{\beta_{1}}}),f_{n_0}^{m_{j}}(x_{\hat{\beta_{2}}}))\geq d(V_{s_{1}},V_{s_{2}})\geq \delta,\;\;j\geq 1.$$
So we get that
$$\limsup_{n\rightarrow\infty}d(f_{n_0}^{n}(x_{\hat{\beta_{1}}}),f_{n_0}^{n}(x_{\hat{\beta_{2}}}))\geq \delta.\ \ \ \ \ \eqno(4.4)$$
On the other hand, again by (4.3) one has that
$$\sigma^{k_{n}}(\hat{\beta})=(\underbrace{j_{0}, j_{0},\cdots, j_{0}}_{n},w_{0}, B_{1},\cdots, B_{n-1},\cdots ),$$
where
$$k_{n}=n(n-1)/2+(n-1)l_{1}+(n-2)(n-1)l_{2}/2.$$
So, $f_{n_0}^{k_{n}}(x_{\hat{\beta_{1}}}),f_{n_0}^{k_{n}}(x_{\hat{\beta_{2}}})\in V_{\alpha}^{n-1,k_{n}+n_0}$. Then
$$d(f_{n_0}^{k_{n}}(x_{\hat{\beta_{1}}}),f_{n_0}^{k_{n}}(x_{\hat{\beta_{2}}}))\leq d( V_{\alpha}^{n-1,k_{n}+n_0}).$$
Since $d(V_{\alpha}^{n,m+n_0})$ uniformly converges to 0 with respect to $m\geq0$ as $n\rightarrow\infty$, it yields that
$$\liminf_{n\rightarrow\infty}d(f_{n_0}^{n}(x_{\hat{\beta_{1}}}),f_{n_0}^{n}(x_{\hat{\beta_{2}}}))=0.\ \ \ \ \ \eqno(4.5)$$
Hence, it follows from (4.4) and (4.5) that $S$ is an uncountable $\delta$-scrambled set of system (2.1).

Moreover, for any $x_{0}\in S$, its orbit $\{x_{n}\}_{n=0}^{\infty}$ under system (2.1) lies in $\bigcup\limits_{i=1}^{N}V_{i}$. Then $\{x_{n}\}_{n=0}^{\infty}$ is bounded since $\bigcup\limits_{i=1}^{N}V_{i}$ is bounded.
So, all the orbits starting from the points in $S$ are bounded. Therefore, system (2.1) is chaotic in the strong sense of Li-Yorke.

Consequently, system (1.1) is chaotic in the strong sense of Li-Yorke by assumption (i) and Lemma 2.3.
This completes the proof.
\end{proof}

\section{An Example}

In this section, an example is discussed and shown to be chaotic in the strong sense of Li-Yorke by Theorem 4.2.

\begin{example} Consider the following time-varying logistic system:
$$x_{n+1}=r_{n}x_{n}(1-x_{n}),\;\; n\geq0,\ \ \ \             \eqno(5.1)$$
governed by the maps $f_{n}(x)=r_{n}x(1-x)$, where $r_{n}\geq 9/2$, $n\geq0$.
It is evident that $f_{n}\;(n\geq0)$ are continuous in $(-\infty, +\infty)$. Set
$$V_{1}=[3/5, 1],\; \; V_{2}=[0, 1/3].\ \ \ \ \eqno(5.2)$$
It is clear that $V_{1}$ and $V_{2}$ are disjoint compact subsets of $[0,1]$, and
$$f_{n}(V_{1})=[0, 6r_{n}/25],\; \;  f_{n}(V_{2})=[0, 2r_{n}/9].\ \ \ \ \eqno(5.3)$$
Since $6r_{n}/25>1$ and $2r_{n}/9\geq1$ by $r_{n}\geq 9/2$, by (5.2) and (5.3) one has that
$$f_{n}(V_{1})\cap f_{n}(V_{2})\supset V_{1}\cup V_{2}, \;\;n\geq0.$$
On the other hand, one has that
$$|f_{n}'(x)|=|r_{n}(1-2x)|\geq r_{n}/3\geq3/2>1,\;\; x\in V_{2}.$$
So,
$$|f_{n}(x)-f_{n}(y)|\geq3/2|x-y|,\;\; x\in V_{2}.$$
Hence, all the assumptions in Theorem 4.2 hold for system (5.1) with $a_{ij}=1$ for $i,j=1,2$. By Theorem 4.2, system (5.1) is chaotic in the strong sense of Li-Yorke.
\end{example}

\begin{remark} Note that each of all the results obtained in Sections 3 and 4 can be applied to this example. Comparing to Example 5.1 in [26], we remove the control of upper bounds of $\{r_{n}\}_{n=0}^{\infty}$. So system (5.1) doesn't satisfy assumption (iii) of Lemma 2.8, and thus Lemma 2.8 can not be applied to this example in the case that $\{r_{n}\}_{n=0}^{\infty}$ is not bounded.
\end{remark}


\medskip

Received xxxx 20xx; revised xxxx 20xx.
\medskip

\end{document}